\definecolor{forestgreen}{rgb}{0.0, 0.27, 0.13}
\newtheorem{theorem}{Theorem}
\newtheorem{lemma}[theorem]{Lemma}
\newtheorem{corollary}[theorem]{Corollary}
\newtheorem{proposition}[theorem]{Proposition}
\theoremstyle{definition}
\newtheorem{example}[theorem]{Example}
\newtheorem{remark}[theorem]{Remark}
\def\bN{\mathbb{N}}
\def\bS{\mathbb{S}}
\def\bQ{\mathbb{Q}}
\def\cA{\mathcal{A}}
\def\dd{\mathrm{d}}
\DeclareMathOperator{\Id}{Id}
\DeclareMathOperator{\diam}{diam}
\DeclareMathOperator{\Int}{Int}
\DeclareMathOperator{\diff}{Diff}
\DeclareMathOperator{\supp}{supp}
\DeclareMathOperator{\GL}{GL}
\newcommand{\N}{\mathbb{N}}
\newcommand{\Z}{\mathbb{Z}}
\newcommand{\R}{\mathbb{R}}
\newcommand{\LimS}{\displaystyle\limsup}
\newcommand\norm[1]{\left\lVert#1\right\rVert}
\def\supp{\operatorname{supp}}
\def\dim{\operatorname{dim}}
\def\diff{\operatorname{Diff}}
\def\hom{\operatorname{Hom}}
\def\min{\operatorname{min}}
\def\diam{\operatorname{diam}}
\DeclareMathSymbol{\emptyset}{\mathord}{AMSb}{"3F}
\providecommand{\keywords}[1]{%
  \vspace{0.5em}
  \noindent\textbf{Keywords:} #1
}
\providecommand{\msc}[1]{%
  \vspace{0.5em}
  \noindent\textbf{2020 Mathematics Subject Classification:} #1
}
\author[1]{Jamerson Bezerra}
\author[2]{Graccyela Salcedo}
\affil[1]{
    Universidade Federal do Ceará, Av. da Universidade, 2853 - Benfica, 60020-181 Fortaleza - CE, Brazil\\
    
    {\small (email: \texttt{jdbezerra@mat.ufc.br})}
}
\affil[2]{
    Centre de Physique Théorique, CNRS, Institut Polytechnique de Paris, Palaiseau, France\\
    
    {\small (email: \texttt{graccyela.salcedo@polytechnique.edu}) }
}
\begin{document}

\title{A version of Oseledets for proximal random dynamical systems on the circle}
\date{\vspace{-5ex}}

\maketitle

\begin{abstract}
    We study proximal random dynamical systems of homeomorphisms of the circle without a common fixed point. We prove the existence of two random points that govern the behavior of the forward and backward orbits of the system. Assuming the differentiability of the maps, we characterize these random points in terms of the extremal Lyapunov exponents of the random dynamical system. As an application, we prove the exactness of the stationary measure in this setting.
\end{abstract}

\keywords{
    random dynamical system, proximality, stationary measure, Lyapunov exponent, exact dimensionality, Furstenberg entropy.
}

\msc{
    37H15, 37E10, 37Hxx, 60G50, 37B05.
}

\section{Introduction}
We study the hyperbolic behavior of orbits in random dynamical systems (RDSs) on $\bS^1$. Over the past sixty years, the theory of RDSs has evolved significantly, driven by the works of Furstenberg \cite{Fur:73}, Kifer \cite{Kif:2012}, Arnold \cite{Arn1995}, and many others. These contributions have established a strong interplay between deterministic and probabilistic dynamical systems.

A central class of RDSs arises from linear cocycles. These are systems induced by
the action of $\GL_2(\R)$ matrices on $\bS^1$. The dynamical and statistical properties of these systems are often connected with  behavior of (extremal) Lyapunov exponents, which measure the asymptotic exponential growth rate of contraction or expansion in the circle action, as formalized by Oseledets’ theorem \cite{Os1968}. See \cite{Led:84} for a detailed discussion on Oseledets' theorem and its applications. For a modern presentation see \cite{Via:14}.

In the linear case, Oseledets’ theorem also asserts that if there is a gap between the two Lyapunov exponents of the system, then there exists a pair of random directions in $\bS^1$, called \emph{Oseledets directions}, denoted $e^s$ and $e^u$, that characterize the dynamics of the action. Specifically, $e^u$ and $e^s$ attract the forward and backward orbits of the cocycle, respectively (see \cite{Via:14}). Our primary goal is to extend Oseledets’ result to the setting of nonlinear RDSs generated by homeomorphisms on $\bS^1$. We provide sufficient conditions ensuring the existence of random directions that govern the system’s asymptotic behavior.

Unlike RDSs induced by linear maps, nonlinear effects introduced by general homeomorphisms can cause deviations at small scales, adding complexity to the study of random system orbits. These effects give rise to new dynamical behavior, which has recently drawn considerable attention (see \cite{Mal:17}, \cite{GelSal:23}).

We build on the work of \cite{Mal:17} to study the topological properties of proximal RDSs. Proximality was first introduced by Furstenberg \cite{Fur:73} in the context of defining the boundary of Lie groups. In \cite{GuiRau:86}, Guivarc’h and Raugi established the typicality of the proximality condition among RDSs induced by linear maps.

It is well established that under the proximality condition, RDSs exhibit a broad spectrum of statistical properties. In particular, in \cite{GelSal:24} showed that if proximality is combined with a local contraction property, then the system satisfies a strong law of large numbers, a central limit theorem, and a law of the iterated logarithm (see \cite[Theorem 1.6]{GelSal:24}). Besides their theoretical relevance, these results provide a solid framework to interpret the asymptotic behavior of random orbits and to support the validation of numerical simulations against the expected statistical laws.

We prove that a proximal RDS without a common fixed point admits a pair of random Oseledets directions. Under an additional regularity assumption, we characterize these directions in terms of the extremal Lyapunov exponents. Finally, we establish the exact dimensionality of the associated stationary measures, extending the linear case studied by Hochman and Solomyak \cite{HocSol:17}. Below, we present the precise statements of our results.

\subsection{Statement of the results}
Denote by $\hom(\bS^1)$ the space of the circle homeomorphisms by $\hom(\bS^1)$ and let $d$ be the usual metric on $\bS^1 := \R/\Z$. Each probability measure $\nu$ on $\hom(\bS^1)$ induces a \emph{random dynamical system} (RDS) on $\bS^1$ produced by random compositions of the form $f_n\circ\cdots\circ f_1$, $n\in \N$, with $f_i$ chosen accordingly with the distribution $\nu$.

Denote by $X_{\nu}$ the topological support of $\nu$. Let $\Gamma_\nu\subset \hom(\bS^1)$ be the semigroup generated by $X_{\nu}$. We say that the measure $\nu$, or the $\Gamma_{\nu}$ action, or equivalently the RDS induced by $\nu$, is \emph{proximal} if for all $x,y\in \bS^1$ there exists a sequence $(g_n)_{n\in\bN}$ of elements in $\Gamma_{\nu}$ such that
\begin{align*}
    \lim_{n\to\infty}d(g_n(x), g_n(y)) = 0.
\end{align*}

On top of the proximality assumption, we assume frequently in this work that the semigroup $\Gamma_{\nu}$ generated by the support $\nu$ does not fix a point in $\bS^1$. This means that there exists no $x\in \bS^1$ such that $f(x) = x$ for every $f\in \Gamma_{\nu}$. This is the natural condition on the proximal system to ensure dynamical complexity and play a role analogous to the gap condition in the Oseledets' theorem (see Remark \ref{rk:noInvariantMeasure}).

For the following, denote by $\sigma: X_\nu^\bN\to  X_\nu^\bN$ the \emph{left-shift map}, that is, $\sigma\omega = (f_n)_{n\geq 2}$. Our first main result generalizes Oseledets' theorem to general RDSs of circle homeomorphisms.
\begin{theorem}\label{thm:1}
\label{teo:01}
    Let $\nu$ be a probability measure on $\hom(\bS^1)$.
    Assume that $\Gamma_{\nu}$ is proximal and does not fix any point in $\bS^1$. Then there exist measurable functions $\pi,\theta: X_\nu^\bN\to\bS^1$ such that
    \begin{enumerate}
         \item for all $x\in\bS^1$ and  $\nu^\bN$-almost every sequence $\omega=(f_n)_{n\in\bN}\in X_{\nu}^{\N}$,
         $$
         \lim_{n\to\infty}f_1\circ\cdots\circ f_n(x)=\pi(\omega)
         \quad\mbox{and}\quad
         \lim_{n\to\infty}f_1^{-1}\circ\cdots\circ f_n^{-1}(x)=\theta(\omega);
         $$
         
         \item  for $\nu^\bN$-almost every sequence $\omega=(f_n)_{n\in\bN}\in X_{\nu}^{\N}$ and for each pair of closed sets $A\subset\bS^1\backslash\{\theta(\omega)\}$, $B\subset\bS^1\backslash\{\pi(\omega)\}$,
         $$
         \lim_{n\to\infty}\diam f_n\circ\cdots\circ f_1(A)=
         \lim_{n\to\infty}\diam f_n^{-1}\circ\cdots\circ f_1^{-1}(B)=0;
         $$
         \item for all $\nu^{\N}$-almost every sequence $\omega=(f_n)_{n\in\bN}\in X_{\nu}^{\N}$
        \begin{align*}
            f_1(\pi(\sigma\omega)) = \pi(\omega)
            \quad
            \text{and}
            \quad
            f_1(\theta(\omega)) = \theta(\sigma\omega).
        \end{align*}
    \end{enumerate}
\end{theorem}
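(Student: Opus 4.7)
My plan is to produce the attracting point $\pi(\omega)$ via a measure-valued martingale argument on the backward random compositions, extract the repelling point $\theta(\omega)$ from the shrinking complementary arc that appears as a by-product of this concentration, and finally deduce parts~(2) and~(3) from the same arc analysis together with elementary continuity.

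I would begin by invoking Krylov-Bogolyubov to produce a $\nu$-stationary probability measure $\mu$ on $\bS^1$. Proximality together with the no-common-fixed-point hypothesis forces $\mu$ to be non-atomic: a finite set of atoms of maximal weight would be $\Gamma_\nu$-invariant, and proximality would then collapse it to a single common fixed point, contradicting the hypothesis; after standard reductions one may arrange $\supp\mu=\bS^1$. Setting $\mu_n(\omega):=(f_1\circ\cdots\circ f_n)_\ast\mu$, stationarity yields the identity $\mathbb{E}[\mu_{n+1}\mid\mathcal{F}_n]=\mu_n$ for the natural filtration $\mathcal{F}_n=\sigma(f_1,\ldots,f_n)$, so $(\mu_n)_{n\geq 0}$ is a measure-valued martingale converging weak-$*$ almost surely to a random limit $\mu_\infty(\omega)$. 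The crucial topological step will be to show that $\mu_\infty(\omega)$ is almost surely a Dirac mass $\delta_{\pi(\omega)}$: two distinct points in $\supp\mu_\infty$ could be brought arbitrarily close by further random compositions in the tail (by proximality), contradicting martingale stability; this defines the measurable function $\pi$.

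To upgrade weak convergence of measures to the pointwise statement $f_1\circ\cdots\circ f_n(x)\to\pi(\omega)$, I would fix $\varepsilon>0$ and study the preimage arc $J_n(\omega):=(f_1\circ\cdots\circ f_n)^{-1}B(\pi(\omega),\varepsilon)$, whose $\mu$-measure tends to $1$; since $\mu$ is non-atomic and fully supported, the complementary arc shrinks in Hausdorff distance to a single random point, giving, after a countable-diagonal argument in $\varepsilon$, a measurable function $\theta:X_\nu^\bN\to\bS^1$. Every $x\neq\theta(\omega)$ then lies in $J_n(\omega)$ eventually, so $f_1\circ\cdots\circ f_n(x)\to\pi(\omega)$; the exceptional point $x=\theta(\omega)$ is recovered from the continuity identity $f_1\circ\cdots\circ f_n(x)=f_1(f_2\circ\cdots\circ f_n(x))$, once one knows $\theta(\omega)\neq\theta(\sigma\omega)$ a.s.\ (a consequence of the law of $\theta$ being non-atomic). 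An analogous treatment of the forward compositions $F_n=f_n\circ\cdots\circ f_1$---either by applying the same machinery to the inverse random dynamical system, or more directly by Malicet-style contraction estimates available under proximality---will yield $f_1^{-1}\circ\cdots\circ f_n^{-1}(x)\to\theta(\omega)$ for every $x$. Statement~(2) follows from the same arc picture: a closed $A\subset\bS^1\setminus\{\theta(\omega)\}$ is eventually contained in the region of near-constancy of $F_n$, forcing $\diam F_n(A)\to 0$, and dually for $B$. Statement~(3) is immediate from continuity of $f_1$ and the explicit limit definitions of $\pi,\theta$ together with an index shift.

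The main obstacle will be the Dirac reduction of $\mu_\infty$: purely measure-theoretic martingale arguments do not suffice, and one must carefully combine the topological proximality hypothesis with the tail behavior of the random products, in the spirit of Furstenberg-Guivarc'h-Raugi and Malicet. A secondary but genuine technical point is the extension of pointwise convergence to the exceptional point $x=\theta(\omega)$, which rests on a non-atomicity statement about the joint law of $(\theta(\omega),\theta(\sigma\omega))$.
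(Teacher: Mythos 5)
Your architecture for $\pi$ --- the Guivarc'h--Raugi measure-valued martingale $(f_1\circ\cdots\circ f_n)_\ast\eta\to\rho_\omega$ followed by a proximality argument forcing $\rho_\omega$ to be a Dirac mass --- is the paper's route as well, and you rightly flag the Dirac reduction as the crux. The fatal problem is your construction of $\theta$. You define $\theta(\omega)$ as the limit of the complementary arcs $J_n(\omega)^c$, where $J_n(\omega)=(f_1\circ\cdots\circ f_n)^{-1}B(\pi(\omega),\varepsilon)$. Even granting that $\eta(J_n^c)\to0$ forces $\diam J_n^c\to0$ --- which already requires $\supp\eta=\bS^1$, a property you cannot simply ``arrange'' (the minimal set may be a Cantor set, and collapsing its gaps changes the maps and does not transfer the pointwise convergence statements back) --- there is no reason for these arcs to converge to a point. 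They are the analogue of the most-contracted singular directions of $A_1\cdots A_n$ in the linear case, which satisfy $v_{n+1}\approx A_{n+1}^{-1}v_n$: they follow an orbit of the inverse system and typically equidistribute rather than converge. This is consistent with the theorem itself, where part (1) asserts convergence of $f_1\circ\cdots\circ f_n(x)$ for \emph{every} $x$ (the null set depending on $x$) with no exceptional point, and $\theta$ governs the \emph{forward} compositions $f_n\circ\cdots\circ f_1$ in part (2). The paper builds $\theta$ by a different mechanism: a dichotomy (Proposition \ref{prop:syncofintervals}, resting on Malicet's zero--one law for the two-point motion) showing that for a fixed typical $\omega$ and all $x,y$ one of the two arcs $[x,y]$, $[y,x]$ is contracted by $f_n\circ\cdots\circ f_1$; the sets of points contracted on either side of a reference point then cover $\bS^1$ minus exactly one point, which is $\theta(\omega)$. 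Your patch at the ``exceptional point'' via $\theta(\omega)\neq\theta(\sigma\omega)$ is addressing an artifact of the wrong construction; with the correct quantifier order one simply shows $\nu^{\bN}(\Omega_x)=0$ for each fixed $x$ using synchronization.

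Two further gaps. First, your ``analogous treatment of the inverse random dynamical system'' presupposes that $\nu^-$ is proximal; this does not follow formally from proximality of $\nu$ (contracting maps have expanding inverses) and requires a separate argument (Proposition \ref{prop:proxInverseSystem}), which in the paper itself uses the already-constructed forward repelling point $\theta$ --- so the logical order of the constructions matters. Second, in the Dirac reduction the phrase ``brought arbitrarily close by further compositions in the tail, contradicting martingale stability'' hides the real work: the martingale identity gives $\rho_\omega=\lim_n(f_1\circ\cdots\circ f_n\circ g)_\ast\eta$ only for almost every continuation, and to substitute a specific sequence $g_k\in\Gamma_\nu$ with $(g_k)_\ast\eta\to\delta_z$ one must (a) produce such a sequence in the first place --- the paper gets it from Corollary \ref{cor01:prop:NoSyncSection}, again a consequence of the construction of $\theta$ --- and (b) extract a subsequential limit $h$ of $f_1\circ\cdots\circ f_n$ with a countable discontinuity set (using that homeomorphisms are monotone) in order to pass the weak-$\ast$ limit through the push-forward. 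As written, the proposal does not close either point.
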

Theorem \ref{thm:1} generalizes Oseledets' theorem \cite{Os1968} from RDSs induced by linear cocycles to general RDSs of homeomorphisms. The random directions $\pi$ and $\theta$ in Theorem \ref{thm:1} serve as analogs of the stable and unstable directions in Oseledets' classical result. However, we note that our setting only assumes that the maps in the support of the reference measure $\nu$ are circle homeomorphisms. Additionally, we do not require these maps to preserve or reverse orientation.

In contrast with earlier work by Kleptsyn and Nalskii \cite{KleNal:2004}, our assumptions are significantly weaker. They require minimality for both the original system and its inverse (the one induced by the measure defined in \eqref{def:eta-}), and they further assume the existence of a ``north pole/south pole" type map—namely, a homeomorphism with exactly two fixed points, one attracting and one repelling. In contrast, our assumptions are weaker: forward minimality alone ensures the absence of a common fixed point for all maps, while proximality follows from the existence of a map of north pole/south pole type. It is also worth noting that forward minimality does not imply minimality of the inverse system; see, for instance, \cite[Example 3.3.2]{BarGhaMalSar:2017}.

A probability measure $\eta$ on $\bS^1$ satisfying
\begin{align}\label{eq:def-nu-stat}
    \eta=\int_{X_\nu} f_\ast\eta\,\dd\nu(f).
\end{align}
is called \emph{$\nu$-stationary measure}.
The following corollary of Theorem \ref{thm:1} characterizes the distribution of the random ``stable" and ``unstable" directions as stationary measures.

\begin{corollary}\label{cor01:teo01}
     Let $\nu$ be a probability measure on $\hom(\bS^1)$. Assume that $\Gamma{\nu}$ is proximal and does not fix any point in $\bS^1$. Consider the measurable functions $\pi,\theta: X_\nu^\bN\to\bS^1$ as in Theorem \ref{teo:01}. Then the probability measures $\eta$ and $\eta^-$ on $\bS^1$ given by
     \[
     \eta=\int_{X_\nu^{\bN}}\delta_{\pi(\omega)}\,\dd\nu^{\bN}(\omega)\quad\mbox{and}\quad\eta^-=\int_{X_\nu^{\bN}}\delta_{\theta(\omega)}\,\dd\nu^{\bN}(\omega),
     \]
     are the unique probability measures on $\bS^1$ such that
     \begin{equation}\label{eq:cor01:teo01}
          \eta=\int_{X_\nu} f_\ast\eta\,\dd\nu(f)
          \quad
          \mbox{and}
          \quad
          \eta^-=\int_{X_\nu} (f^{-1})_\ast(\eta^-)\,\dd\nu(f).    
     \end{equation}
\end{corollary}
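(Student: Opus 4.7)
The plan is to establish the corollary by verifying two things in parallel for each of $\eta$ and $\eta^-$: first, that the candidate measure satisfies the relevant stationary identity, and second, that it is the only probability measure doing so. Stationarity will fall out from property (3) of Theorem \ref{thm:1} combined with shift-invariance of $\nu^{\bN}$, while uniqueness will follow by iterating the stationary equation and invoking the pointwise convergence from property (1).

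For stationarity of $\eta$, fix a continuous test function $\varphi\colon \bS^1 \to \R$ and compute
\[
\int_{X_\nu} \int_{\bS^1}\varphi\, d(f_*\eta)\, d\nu(f) = \int_{X_\nu}\int_{X_\nu^\bN} \varphi(f(\pi(\omega)))\, d\nu^{\bN}(\omega)\, d\nu(f).
\]
The measure-preserving identification $\nu\otimes\nu^{\bN} \cong \nu^{\bN}$ that places $f$ in the first slot of the sequence turns the right-hand side into $\int_{X_\nu^{\bN}} \varphi(f_1(\pi(\sigma\omega)))\, d\nu^{\bN}(\omega)$, where $f_1$ is now the first coordinate of $\omega$. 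Property (3) of Theorem \ref{thm:1} rewrites the integrand as $\varphi(\pi(\omega))$, so the whole expression equals $\int\varphi\, d\eta$. The analogous computation for $\eta^-$ uses the other half of property (3), which is equivalent to $f_1^{-1}(\theta(\sigma\omega)) = \theta(\omega)$.

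For uniqueness, let $\mu$ be a $\nu$-stationary probability measure. Iterating the stationary identity yields
\[
\mu = \int_{X_\nu^{\bN}} (f_1\circ\cdots\circ f_n)_*\mu\, d\nu^{\bN}(\omega)
\]
for every $n\geq 1$. Testing against a bounded continuous $\varphi$ and using Fubini produces
\[
\int\varphi\, d\mu = \int_{\bS^1}\int_{X_\nu^{\bN}} \varphi(f_1\circ\cdots\circ f_n(x))\, d\nu^{\bN}(\omega)\, d\mu(x).
\]
By property (1), the integrand converges pointwise in $(x,\omega)$ to $\varphi(\pi(\omega))$ as $n\to\infty$, and it is bounded by $\|\varphi\|_\infty$, so dominated convergence delivers $\int\varphi\, d\mu = \int\varphi(\pi(\omega))\, d\nu^{\bN}(\omega) = \int\varphi\, d\eta$, hence $\mu = \eta$. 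The proof that $\eta^-$ is the unique $\int (f^{-1})_*\cdot\, d\nu(f)$-stationary measure is identical, using the backward convergence $f_1^{-1}\circ\cdots\circ f_n^{-1}(x) \to \theta(\omega)$ also from part (1).

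No step should present a real obstacle: once Theorem \ref{thm:1} is in hand, the corollary is essentially an unwinding of the definitions of $\eta$ and $\eta^-$. The only places requiring any care are the measure-preserving identification $\nu\otimes\nu^{\bN}\cong\nu^{\bN}$ used in the stationarity computation and the justification of the dominated-convergence step in the uniqueness argument, both of which are routine.
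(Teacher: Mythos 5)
Your proof is correct, and its core mechanism coincides with the paper's: iterate the stationarity identity to get $\mu=\int(\overline{f_\omega^n})_*\mu\,\dd\nu^{\bN}(\omega)$ and pass to the limit with dominated convergence using the almost-sure convergence $\overline{f_\omega^n}(x)\to\pi(\omega)$ from Theorem \ref{teo:01}(1). The logical structure differs, though. The paper leans on Proposition \ref{prop:sync+unicityStat} (synchronization plus \cite{Ste:2012}) to know in advance that the $\nu$-stationary and $\nu^-$-stationary measures are unique, and then only has to identify the known stationary measure with $\int\delta_{\pi(\omega)}\,\dd\nu^{\bN}(\omega)$ by running the limit computation on $\eta$ itself. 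You instead run the same computation on an arbitrary stationary $\mu$, which yields uniqueness as a byproduct, and you separately verify stationarity of the candidate measure from the equivariance $f_1(\pi(\sigma\omega))=\pi(\omega)$ in part (3) — a step the paper does not need since it starts from a measure already known to be stationary. Your version is thus self-contained modulo Theorem \ref{teo:01}, at the cost of an extra (routine) equivariance computation. Two minor points to make explicit if you write this up: the exceptional $\omega$-null set in Theorem \ref{teo:01}(1) depends on $x$, so the pointwise convergence of the integrand holds $\mu\otimes\nu^{\bN}$-almost everywhere by Fubini, which is what dominated convergence requires; and iterating $\mu=\int f_*\mu\,\dd\nu(f)$ naturally produces compositions whose order must be checked against the order $f_1\circ\cdots\circ f_n$ used in part (1) — this is harmless because the $f_i$ are i.i.d., so $(f_1\circ\cdots\circ f_n)_*\mu$ and $(f_n\circ\cdots\circ f_1)_*\mu$ have the same distribution under $\nu^{\bN}$, but it deserves a sentence.
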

To further characterize the random directions $\pi$ and
$\theta$, let us now assume additional regularity in the system. Let $\diff^{1+\tau}(\bS^1)$ be the group of circle diffeomorphisms whose derivatives are Hölder continuous with exponent $\tau>0$. Let $\nu$ be a probability measure on $\diff^{1+\tau}(\bS^1)$ satisfying the following 
\emph{integrability conditions}
\begin{equation}\label{int-cond}\tag{I}
    \int \log \sup_{x\in\bS^1}\vert f'(x)\vert\,\dd\nu(f)<\infty\quad\mbox{and}\quad\int \log \inf_{x\in\bS^1}\vert f'(x)\vert\,\dd\nu(f)>-\infty.
\end{equation}
By Kingman's sub-additive ergodic theorem, we see that for $\nu^{\N}$-a.e $\omega = (f_n)_{n\in\N}\in X_{\nu}^{\N}$ the limits
\begin{align*}
    \Lambda(\nu)
    &:= \lim_{n\to\infty} \frac{1}{n}\log\sup_{z\in \bS^1}|(f_n\circ\ldots\circ f_1)'(z)|
    \quad
    \text{and}\\
    \lambda(\nu)
    &:= \lim_{n\to\infty}\frac{1}{n}\log\inf_{z\in \bS^1}|(f_n\circ\ldots\circ f_1)'(z)|.
\end{align*}
exists and are independent of $\omega$. We call the numbers $\Lambda(\nu)$ and $\lambda(\nu)$ the \emph{extremal Lyapunov exponents} of $\nu$.

By considering the associated derivative cocycle, we obtain the following generalization of Oseledets' theorem \cite{Os1968} (see also \cite{Led:84}), stated in terms of the extremal Lyapunov exponents.
\begin{theorem}\label{teo:02}
Let $\nu$ be a probability measure on $\diff^{1+\tau}(\bS^1)$ satisfying the integrability condition \eqref{int-cond}. Assume that $\Gamma_{\nu}$ is proximal and does not fix any point in $\bS^1$. Consider $\pi,\theta:X_\nu^\bN\to\bS^1$ as in Theorem \ref{teo:01}. Then,
    for $\nu^{\N}$-almost every $\omega=(f_n)_{n\in\bN}\in X_{\nu}^{\N}$,
    \[
        \lim_{n\to\infty}\frac{1}{n}\log|(f_n^{-1}\circ\cdots\circ f_1^{-1})'(x)|= \begin{cases}
            -\Lambda(\nu),&x\neq\pi(\omega)\\
            -\lambda(\nu),&x=\pi(\omega)
        \end{cases},
    \]
    and 
    \[
    \lim_{n\to\infty}\frac{1}{n}\log|(f_n\circ\cdots\circ f_1)'(x)|=\begin{cases}
        \lambda(\nu),& x\neq\theta(\omega) \\
        \Lambda(\nu),&x=\theta(\omega)
    \end{cases}.
    \]
\end{theorem}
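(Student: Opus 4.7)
The strategy is to prove the forward identity $\lim\tfrac1n\log|(f^n_\omega)'(x)|=\lambda(\nu)$ (resp.\ $\Lambda(\nu)$) for $x\neq\theta(\omega)$ (resp.\ $x=\theta(\omega)$) in three steps: compute the exponent at $\theta(\omega)$ via Birkhoff, extend to the other points via bounded distortion, and identify the resulting two constants with $\Lambda(\nu),\lambda(\nu)$ by exploiting $\int_{\bS^1}|(f^n_\omega)'|\,dx=1$. The backward identity is handled analogously. At $x=\theta(\omega)$, Theorem \ref{teo:01}(3) gives $f^n_\omega(\theta(\omega))=\theta(\sigma^n\omega)$, so the chain rule yields
\[
\tfrac1n\log|(f^n_\omega)'(\theta(\omega))|=\tfrac1n\sum_{k=0}^{n-1}\log|f_1'(\theta(\sigma^k\omega))|,
\]
an ergodic average of the $\nu^\bN$-integrable observable $\omega\mapsto\log|f_1'(\theta(\omega))|$ (integrability from \eqref{int-cond}). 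Birkhoff's theorem produces an almost-sure limit $\alpha:=\int\log|f_1'(\theta(\omega))|\,d\nu^\bN(\omega)$.

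For $x\neq\theta(\omega)$, fix a closed arc $A\subset\bS^1\setminus\{\theta(\omega)\}$ containing $x$. Theorem \ref{teo:01}(2) provides $\diam(f^n_\omega A)\to 0$; combining sub-multiplicativity of diameters under the cocycle with Kingman's sub-additive ergodic theorem upgrades this to exponential decay, so $\sum_k\diam(f^k_\omega A)^\tau<\infty$ almost surely. The $C^{1+\tau}$ distortion bound
\[
\bigl|\log|(f^n_\omega)'(x)|-\log|(f^n_\omega)'(y)|\bigr|\leq C\sum_{k=0}^{n-1}\diam(f^k_\omega A)^\tau
\]
is then uniform in $n$ for $x,y\in A$; hence $\tfrac1n\log|(f^n_\omega)'(x)|$ has a common almost-sure limit $\beta$ for every $x\in A$, and exhausting $\bS^1\setminus\{\theta(\omega)\}$ by such arcs extends this to every $x\neq\theta(\omega)$.

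To identify $\alpha=\Lambda(\nu)$ and $\beta=\lambda(\nu)$ I would exploit $\int_{\bS^1}|(f^n_\omega)'(x)|\,dx=1$. Jensen's inequality and dominated convergence (using the uniform Kingman bound on $\tfrac1n\log|(f^n_\omega)'|$) give $\beta\leq 0$, and the remaining Lebesgue mass must concentrate in an exponentially small neighborhood of $\theta(\omega)$, forcing $\alpha\geq 0$. Let $z_n^*$ and $y_n^*$ realize the supremum and infimum of $|(f^n_\omega)'|$. If $z_n^*$ stayed bounded away from $\theta(\omega)$ along a subsequence, the previous paragraph would force $\Lambda(\nu)=\beta$; combined with $0\leq\alpha\leq\Lambda(\nu)=\beta\leq 0$, this collapses $\alpha=\beta=0$ and the identity holds trivially. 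Otherwise $z_n^*\to\theta(\omega)$ at rate at most $e^{-n\alpha}$, and a short-orbit Hölder estimate comparing $f^k_\omega z_n^*$ with $\theta(\sigma^k\omega)$ for $k<n$ keeps the distortion bounded on the shrinking maximizer neighborhood and yields $\Lambda(\nu)=\alpha$. A symmetric argument for $y_n^*$ gives $\lambda(\nu)=\beta$. The main obstacle is the bootstrap from the qualitative convergence in Theorem \ref{teo:01}(2) to exponential decay of $\diam(f^n_\omega A)$, and the matching orbit-tracking near $\theta(\omega)$ that keeps distortion bounded on the exponentially small maximizer-neighborhood; I expect both to follow from Kingman applied to suitable sub-additive cocycles built from the derivative and diameter data.
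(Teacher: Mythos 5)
Your overall architecture coincides with the paper's: Birkhoff along the equivariant point $\theta(\omega)$, a distortion argument making the exponent constant on $\bS^1\setminus\{\theta(\omega)\}$, and an identification of the two constants with $\Lambda(\nu),\lambda(\nu)$ by exploiting that $\sup_z|(f^n_\omega)'(z)|\geq 1$ and that the maximizers must accumulate at $\theta(\omega)$ (the paper does this via Propositions \ref{prop:lox=leta}, \ref{prop:exp-max-theta}, \ref{prop:expintheta-expinpi} and Corollary \ref{cor:teo:02}). However, the two load-bearing estimates you defer to Kingman do not follow from Kingman. First, the ``sub-multiplicativity of diameters'' only gives $\log\diam f^{n+m}_\omega(A)\leq\log\diam f^n_\omega(A)+\log\sup_z|(f^m_{\sigma^n\omega})'(z)|$, and the subadditive cocycle on the right has exponent $\Lambda(\nu)\geq 0$, so Kingman yields no contraction at all; the exponential decay of $\diam f^n_\omega(A)$ is a genuinely different input (Malicet's local-contraction theorem, packaged in the paper as Proposition \ref{prop:syncofintervals}). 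Second, your distortion bound $C\sum_k\diam(f^k_\omega A)^\tau$ with a single constant $C$ presupposes a uniform Hölder bound on $\log|f'|$ over $X_\nu$, which \eqref{int-cond} does not provide. Both issues are repaired simultaneously by the paper's Lemma \ref{lem:modcont}: Birkhoff applied to the integrable moduli of continuity $\alpha_f$ shows $\frac1n\sum_{j<n}\alpha_{f_{j+1}}(\diam f^j_\omega A)\to 0$ as soon as $\diam f^j_\omega A\to 0$, so neither exponential decay nor summability nor a uniform constant is needed.

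The identification step is also incomplete at its crux. Your Jensen argument gives $\beta\leq 0$ and forces the maximizers $z_n^\ast$ toward $\theta(\omega)$ (this parallels the paper's Proposition \ref{prop:exp-max-theta}, where $|(f^{n}_\omega)'|<1$ eventually on any closed arc avoiding $\theta(\omega)$ while $\sup|(f^n_\omega)'|\geq 1$). But the passage from $\frac1n\log|(f^n_\omega)'(z_n^\ast)|\to\Lambda(\nu)$ with $z_n^\ast\to\theta(\omega)$ to the same limit at $\theta(\omega)$ itself is exactly the step you leave as an expectation, and it is not routine: $z_n^\ast\to\theta(\omega)$ gives no a priori control on $d(f^k_\omega(z_n^\ast),f^k_\omega(\theta(\omega)))$ for $k\leq n$, because arcs abutting $\theta(\omega)$ are precisely those whose contraction is not uniform. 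Your auxiliary claim $\alpha\geq 0$ has no independent justification either (it is equivalent to what you are trying to prove at that point, namely $\alpha=\Lambda(\nu)$; only $\Lambda(\nu)\geq 0$ is free). So the skeleton is correct and matches the paper, but the decisive estimates are asserted rather than proved, and the tool you propose for them (Kingman on diameter data) cannot deliver them.
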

{\color{violet}
}

As a consequence of Theorem \ref{teo:02}, we obtain a characterization of the extremal Lyapunov exponents in terms of the stationary measures of the system.
\begin{corollary}
\label{cor:teo:02}
    Let $\nu$ be a probability measure on $\diff^{1+\tau}(\bS^1)$ satisfying the integrability condition in \eqref{int-cond}.
     Assume that $\Gamma_{\nu}$ is proximal and does not fix any point in $\bS^1$. Then,
     \begin{align*}
         \Lambda(\nu) =-\int \log\vert f'(x)\vert\, \dd(\eta^-\otimes\nu)(x,f)
         \quad
         \text{and}
         \quad
         \lambda(\nu) =  \int \log\vert f'(x)\vert\, \dd(\eta\otimes\nu)(x,f),
     \end{align*}
     where $\eta$ and $\eta^-$ are as in Corollary \ref{cor01:teo01}.
\end{corollary}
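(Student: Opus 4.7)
The plan is to combine the pointwise limits from Theorem~\ref{teo:02} with Birkhoff's ergodic theorem, using the stationarity of $\eta$ and $\eta^-$ from Corollary~\ref{cor01:teo01} to furnish invariant product measures on $X_\nu^\bN\times\bS^1$ for two natural skew products. Concretely, for the first identity I would consider the forward skew product $F(\omega,x):=(\sigma\omega,f_1(x))$. The $\nu$-stationarity of $\eta$ makes $\nu^\bN\otimes\eta$ an $F$-invariant probability measure, and the observable $\varphi(\omega,x):=\log|f_1'(x)|$ lies in $L^1(\nu^\bN\otimes\eta)$ by \eqref{int-cond}. The chain rule telescopes its Birkhoff sums into
\[
\sum_{k=0}^{n-1}\varphi\circ F^k(\omega,x)=\log\bigl|(f_n\circ\cdots\circ f_1)'(x)\bigr|.
\]
By Theorem~\ref{teo:02}, the normalized limit equals $\lambda(\nu)$ whenever $x\neq\theta(\omega)$; by Birkhoff's theorem, it equals $\int\log|f'(x)|\,\dd(\eta\otimes\nu)(x,f)$ at $\nu^\bN\otimes\eta$-a.e.\ point. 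Matching the two descriptions yields the first identity, provided the exceptional curve $\{(\omega,x):x=\theta(\omega)\}$ is $\nu^\bN\otimes\eta$-null.

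For the second identity the symmetric argument uses the backward skew product $G(\omega,x):=(\sigma\omega,f_1^{-1}(x))$, which preserves $\nu^\bN\otimes\eta^-$ thanks to the backward stationarity $\eta^-=\int(f^{-1})_\ast\eta^-\,\dd\nu(f)$. With the observable $\log|(f_1^{-1})'(x)|$ the chain-rule telescoping identifies the Birkhoff sums with $\log|(f_n^{-1}\circ\cdots\circ f_1^{-1})'(x)|$, whose normalized limit is $-\Lambda(\nu)$ at every $x\neq\pi(\omega)$ by Theorem~\ref{teo:02}. Applying Birkhoff and then unfolding the resulting integrand via $\log|(f^{-1})'(x)|=-\log|f'(f^{-1}(x))|$ together with the backward-stationarity identity $\int(f^{-1})_\ast\eta^-\,\dd\nu(f)=\eta^-$ converts the expression into the product form
\[
\Lambda(\nu)=-\int\log|f'(x)|\,\dd(\eta^-\otimes\nu)(x,f)
\]
asserted in the corollary.

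The main obstacle is the non-atomicity of $\eta$ and $\eta^-$, needed via Fubini to make the exceptional diagonal sets $\{x=\theta(\omega)\}$ and $\{x=\pi(\omega)\}$ negligible for the product measures above. This is a classical consequence of proximality together with the absence of a common fixed point (cf.\ Remark~\ref{rk:noInvariantMeasure}): an atom of maximal $\eta$-mass would, by proximality, be brought arbitrarily close to another atom of equal mass by elements of $\Gamma_\nu$, forcing $\eta$ to concentrate on a finite $\Gamma_\nu$-invariant set and hence on a common fixed point, contradicting the hypothesis. The same reasoning applied to the inverse system yields non-atomicity of $\eta^-$.
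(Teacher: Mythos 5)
Your derivation of the second identity is sound as a self-contained argument: the skew product $F(\omega,x)=(\sigma\omega,f_1(x))$ does preserve $\nu^\bN\otimes\eta$, the chain rule telescopes correctly, the graph $\{x=\theta(\omega)\}$ is $\nu^\bN\otimes\eta$-null by Fubini and non-atomicity of $\eta$ (which is already Proposition \ref{prop:sync+unicityStat}; you do not need to re-derive it), and Birkhoff identifies the a.e.\ constant limit with $\int\log|f'|\,\dd(\eta\otimes\nu)$. Be aware, however, that this inverts the paper's logical order: the paper proves Corollary \ref{cor:teo:02} \emph{before} Theorem \ref{teo:02}, via Propositions \ref{prop:exp-max-theta} and \ref{prop:expintheta-expinpi} (the maximizers of $|(f_\omega^n)'|$ accumulate at $\theta(\omega)$, and the Birkhoff sums are evaluated along the equivariant points $\theta(\omega)$ and $\pi(\sigma^n\omega)$), and then uses the corollary to identify $\lambda(\eta,\nu)$ with $\lambda(\nu)$ in the proof of the theorem. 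Relative to the paper's architecture your argument is circular; it stands only if Theorem \ref{teo:02} is established independently.

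The genuine gap is in the last step of the first identity. The skew product $G$ and Birkhoff correctly give $-\Lambda(\nu)=\int\log|(f^{-1})'(x)|\,\dd(\eta^-\otimes\nu)(x,f)=\lambda(\eta^-,\nu^-)$, which is exactly what the paper's Propositions \ref{prop:exp-max-theta} and \ref{prop:expintheta-expinpi} yield. Your ``unfolding'' of this into $-\int\log|f'(x)|\,\dd(\eta^-\otimes\nu)(x,f)$ does not work: writing $\log|(f^{-1})'(x)|=-\log|f'(f^{-1}(x))|$ and changing variables gives $\Lambda(\nu)=\int\bigl(\int\log|f'(y)|\,\dd\bigl((f^{-1})_\ast\eta^-\bigr)(y)\bigr)\dd\nu(f)$, and the stationarity identity $\int(f^{-1})_\ast\eta^-\,\dd\nu(f)=\eta^-$ cannot be applied inside this double integral because the integrand $\log|f'|$ itself depends on $f$; moreover, even if it could, the result would be $+\int\log|f'|\,\dd(\eta^-\otimes\nu)$, which has the wrong sign. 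What your argument (and the paper's) actually proves is $\Lambda(\nu)=-\lambda(\eta^-,\nu^-)=-\int\log|g'(x)|\,\dd(\eta^-\otimes\nu^-)(x,g)$, i.e.\ the integration in $f$ must be against the inverse maps; the displayed formula in the corollary should be read that way, and your manipulation should stop there rather than attempting to trade $\nu^-$ for $\nu$.
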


Recently, Barrientos and Malicet, in \cite{BaMa2024}, established a version of Kingman’s subadditive ergodic theorem for Markov operators, using it among other things to study statistical properties of random dynamical systems comprised of Lipschitz transformations under the mostly contracting assumption—that all Lyapunov exponents associated with any stationary measure are negative. We note that the assumption that we use repeatedly fits naturally into their framework; consequently, most of their results apply to proximal systems without fix points of Lipschitz transformations.

In \cite{BaMa2024}, they also introduced a notion of maximal Lyapunov exponent for mostly contracting systems and investigated its properties. Here, we emphasize that our concept of an extremal Lyapunov exponent differs from theirs: Theirs quantifies the maximal contraction rate in mostly contracting systems and equals the largest Lyapunov exponent among stationary measures. In contrast, our extremal Lyapunov exponent measures the system’s expansion rate and is characterized as the negative of the Lyapunov exponent for the stationary measure of the inverse system.    

Let us now study the exactness of the dimension of the stationary measures.  Let $\zeta$ be a probability measure on $\bS^1$. For every $x\in \bS^1$, define the \emph{lower local dimension} $\underline{\dim}$ and \emph{upper local dimension} $\overline{\dim}$ of $\zeta$ at $x$ respectively by
\begin{align*}
    \underline{\dim}(\zeta;x) = \liminf_{r\to 0^+}\frac{
        \log(\zeta(B(x,\, r)))
    }{\log r}
    \quad
    \text{and}
    \quad
    \overline{\dim}(\zeta;x) = \limsup_{r\to 0^+}\frac{
        \log(\zeta(B(x,\, r)))
    }{\log r}.
\end{align*}
A measure $\zeta$ on $\bS^1$ is \emph{exact-dimensional}, if there exists $\alpha\geq 0$ such that for $\zeta$-almost every $x\in \bS^1$ we have that $\underline{\dim}(\zeta;x) = \overline{\dim}(\zeta;x) = \alpha$. In this case, we say that $\alpha$ is the \emph{exact-dimension} of $\zeta$ and write $ \dim(\zeta)= \alpha $. If $\zeta$ is exact-dimensional of dimension $\alpha$, then $\alpha$ is the smallest Hausdorff dimension of sets of positive $\zeta$-measure (see \cite{You:82}).

For a $\nu$-stationary measure $\eta$, we define the \emph{Furstenberg entropy} of the probability measure $\eta$ by
\begin{align*}
    h_F(\eta,\nu) :=\int\, \log\frac{
        \dd f_\ast \eta
    }{\dd\eta}(f(x))\, \dd(\eta\otimes\nu)(x,f).
\end{align*}
The entropy $h_F(\eta)$ quantifies, in a certain sense, the lack of invariance of the measure $\eta$ with respect to the maps in $\Gamma_{\nu}$. For instance, note that $h_F(\eta) = 0$ is equivalent to $f_\ast \eta = \eta$ for every homeomorphism $f\in \Gamma_{\nu}$.

\begin{theorem}\label{teo03}
Let $\nu$ be a probability measure on $\diff^{1+\tau}(\bS^1)$ satisfying \eqref{int-cond}. Assume that $\Gamma_{\nu}$ is proximal and does not fix any point in $\bS^1$. Then the $\nu$-stationary probability measure $\eta$ is exact-dimensional and 
\begin{align}
\label{eq:070325.1}
    \dim(\eta)=-\frac{h_F(\eta,\nu)}{\lambda(\nu)}.
\end{align}
\end{theorem}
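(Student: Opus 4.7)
The plan is to combine the Lyapunov structure from Theorem \ref{teo:02} with a Shannon--McMillan--Breiman / Kifer-type entropy identity, in the spirit of Hochman--Solomyak but adapted to the present nonlinear setting. By Corollary \ref{cor01:teo01}, $\eta=\pi_*\nu^\bN$, so it is enough to prove that for $\nu^\bN$-almost every $\omega\in X_\nu^\bN$,
\[
\lim_{r\to 0^+}\frac{\log \eta(B(\pi(\omega),r))}{\log r}=-\frac{h_F(\eta,\nu)}{\lambda(\nu)}.
\]
This limit is a nonnegative number because $\lambda(\nu)<0$, which in turn follows from combining the uniform diameter contraction in Theorem \ref{teo:01}(2) with the pointwise derivative formula in Theorem \ref{teo:02}.

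The bulk of the argument is a scale-to-time dictionary. Fix $r>0$ small and set $n(r):=\lfloor|\log r|/|\lambda(\nu)|\rfloor$. The $\diff^{1+\tau}$ regularity together with the integrability condition \eqref{int-cond} allows the pointwise derivative formula of Theorem \ref{teo:02} to be upgraded to a uniform bounded distortion statement on neighbourhoods of the backward orbit $\{\pi(\sigma^n\omega)\}_{n\geq 0}$. From this one shows that, up to a multiplicative constant in the scale, the $\pi$-preimage of $B(\pi(\omega),r)$ coincides with the cylinder $\mathcal{P}_{n(r)}(\omega)$, where $\mathcal{P}$ is a suitable measurable partition of $X_\nu^\bN$ depending only on the first coordinate and $\mathcal{P}_n:=\bigvee_{k=0}^{n-1}\sigma^{-k}\mathcal{P}$. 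Consequently,
\[
\eta(B(\pi(\omega),r))\asymp \nu^\bN(\mathcal{P}_{n(r)}(\omega))\qquad\text{as }r\to 0^+,
\]
with matching upper and lower bounds up to subexponential errors in $n(r)$.

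It remains to compute the exponential decay rate of $\nu^\bN(\mathcal{P}_n(\omega))$. Consider the skew product $T(\omega,x)=(\sigma\omega,f_1(x))$ on $X_\nu^\bN\times\bS^1$, which preserves $\nu^\bN\otimes\eta$. A Kifer-type identification of the Furstenberg entropy with the conditional entropy of $T$ relative to the base shift, combined with the Shannon--McMillan--Breiman theorem, yields
\[
\lim_{n\to\infty}-\frac{1}{n}\log \nu^\bN(\mathcal{P}_n(\omega))=h_F(\eta,\nu)\qquad\text{for }\nu^\bN\text{-a.e.\ }\omega.
\]
Combining this with the identity $|\log r|=n(r)|\lambda(\nu)|+O(1)$ produces the desired limit, and hence both the exact dimensionality and formula \eqref{eq:070325.1}. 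The main obstacle lies in the scale-to-time dictionary of the second paragraph: one must control the cumulative distortion along $\nu^\bN$-typical orbits even when individual factors $f_k$ may be nearly degenerate near the exceptional point $\theta(\sigma^k\omega)$. A Borel--Cantelli argument using the distribution of $\theta$ (which is $\eta^-$ by Corollary \ref{cor01:teo01}) together with the H\"older continuity of $\log|f'|$ should show that such close encounters are sufficiently rare that their cumulative effect is subexponential in $n$, which is precisely the tolerance required by the dimension formula.
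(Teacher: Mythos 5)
Your overall skeleton (local dimension at $\pi(\omega)$, scale $r\leftrightarrow$ time $n(r)=|\log r|/|\lambda|$, an entropy over a Lyapunov exponent) matches the target, but the core of your argument --- the scale-to-time dictionary via cylinders and the SMB step --- does not work here, and it is not what the paper does. First, $\pi$ does not send $n$-cylinders into exponentially small balls. On the cylinder $[f_1,\dots,f_n]$ one has $\pi=\overline{f_\omega^n}\circ\pi\circ\sigma^n$, and $\pi\circ\sigma^n$ is distributed according to $\eta$ on the remaining coordinates; so the $\pi$-image of the cylinder is essentially $\overline{f_\omega^n}(\supp\eta)$. Since each $f_i$ is a homeomorphism of the whole circle, $\overline{f_\omega^n}$ contracts only off a moving exceptional point, and $\overline{f_\omega^n}(\supp\eta)$ is \emph{not} small when $\supp\eta$ meets a neighborhood of that point (generically it does). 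So $\eta(B(\pi(\omega),r))\asymp\nu^{\bN}(\mathcal P_{n(r)}(\omega))$ fails in both directions; moreover $\pi$ is highly non-injective, so even if cylinders mapped into small balls you would still need a separation/overlap analysis to pass from cylinder measures to $\eta$-measures of balls, which no amount of distortion control or Borel--Cantelli on close encounters with $\theta$ supplies.

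Second, and independently, the entropy identification is wrong. By Shannon--McMillan--Breiman, $-\frac1n\log\nu^{\bN}(\mathcal P_n(\omega))$ converges to $h_{\nu^{\bN}}(\sigma,\mathcal P)$, which for a partition by the first coordinate of a Bernoulli shift is the Shannon entropy $H(\nu)$ (and is infinite or undefined if $\nu$ is not atomic), \emph{not} the Furstenberg entropy $h_F(\eta,\nu)$. These differ precisely when the boundary map loses information, and substituting $H(\nu)$ yields only the Lyapunov-dimension upper bound --- the exact phenomenon the Furstenberg entropy is designed to capture. The paper's proof avoids symbolic coding entirely: it sets $I_0=B_r(\pi(\omega))$, $I_n=(\overline{f_\omega^n})^{-1}(I_0)$, uses stationarity to write $\eta(I_0)$ as the telescoping product $\eta(I_N)\prod_{n=1}^N \frac{(f_n)_*\eta(I_{n-1})}{\eta(I_{n-1})}$, replaces each ratio by $\frac{\dd (f_n)_*\eta}{\dd\eta}(\pi(\sigma^{n-1}\omega))$ up to an error $G(\sigma^{n-1}\omega,|I_n|)$ controlled by a Maker-type ergodic theorem, and recovers $h_F(\eta,\nu)$ as a genuine Birkhoff average. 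The geometric work you would need to replicate is Proposition \ref{prop:delta-Mn} together with the sets $A_N^\delta$: these give the two-sided comparison $|I_n|\le e^{-\lambda n+\lambda N(1+\hat\varepsilon_N)}$ and the uniform lower bound $\eta(I_N)>c$, which are what make the telescoping argument close. I recommend rebuilding your proof around that telescoping identity rather than a partition of $X_\nu^{\bN}$.
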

Applying the Invariance Principle (see \cite{Mal:17}), we can conclude in this setting that $\lambda(\nu) \leq - h_F(\eta,\nu)<0$. In particular, by the formula \eqref{eq:070325.1} we have that $\dim(\eta) \in (0,1]$.

In \cite{HeJiXu:2023}, the authors also investigate the exact dimensionality of the stationary measure, assuming the stronger hypothesis of orientation-preserving diffeomorphisms of the circle. Both their approach and ours primarily follow the strategy developed in \cite{HocSol:17}. In a sense, our assumption of proximality allows us to circumvent the lack of orientation preservation. To achieve this, we rely heavily on the approximation results for extremal Lyapunov exponents and their associated random directions established above.

The work is structured as follows. In Section \ref{sec:01}, we exhibit some properties of proximal systems and present the proof of Theorem \ref{teo:01} and Corollary \ref{cor01:teo01}. We discuss the differentiable case in Section \ref{sec:diffCase}. Exactness of the dimension of the stationary measure is discussed in Section \ref{sec:exactDim}.

\section{Properties of proximal systems}\label{sec:01}
In this section we explore topological and measure theoretical properties of proximal systems. They will provide the tools  Theorem \ref{teo:01} and Corollary \ref{cor01:teo01}. We start by fixing some notation.

For $x,y\in \bS^1$, $x\neq y$, we write $[x,y]$ and $[y,x]$, for the two arcs connecting $x,y\in \bS^1$. Let $\nu$ be a probability measure on $\hom(\bS^1)$. As before let $X_{\nu}$ be the support of $\nu$ and write $\Gamma_{\nu}$ for the sub-semigroup of $\hom(\bS^1)$ generated by $X_{\nu}$. We say that a measure $\eta$ on $\bS^1$ is \emph{$\Gamma_{\nu}$-invariant} if $f_*\eta = \eta$, for $\nu$-almost every $f\in X_{\nu}$.

Denote by $\sigma\colon X_\nu^\bN\to  X_\nu^\bN$ the left-shift map defined on the product space $X_{\nu}^{\N}$ and consider the $\sigma$-invariant measure $\nu^{\N}$ on $X_{\nu}^{\N}$. Given $\omega = (f_n)_{n\in\bN}\in  X_\nu^\bN$ we fix the following notation
\begin{align}
\label{eq:compositionOrderNotation}
    f^n_{\omega} := f_n\circ \cdots \circ f_1,
    \quad
    \overline{f^n_{\omega}} := f_1\circ \cdots \circ f_n, \quad f^0_{\omega}=\overline{f^0_{\omega}}=\Id_{\bS^1} .
\end{align}

For proximal systems, the presence of an invariant probability measure on the circle which is invariant for all the transformations simultaneously forces the existence of a point which is fixed by the semigroup action. This fact is a consequence of the structural result \cite[Corollary]{Mal:17}, but we add an elementary proof here.
\begin{proposition}
\label{prop:invariantMeasureFixedPoint}
     Assume that $\Gamma_{\nu}$ is proximal. If $\eta$ is a $\Gamma_{\nu}$-invariant probability measure on $\bS^1$, then $\eta = \delta_{z_0}$, for some $z_0\in \bS^1$ with $g(z_0) = z_0$, for $\nu$-almost every $g\in X_{\nu}$.
\end{proposition}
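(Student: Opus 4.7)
The plan is to split the argument into three stages: first extend invariance from $\nu$-almost every $g \in X_\nu$ to all $g \in \Gamma_\nu$, then show $\eta$ must have an atom, and finally argue that this atom is unique, $\Gamma_\nu$-fixed, and carries all the mass of $\eta$. The extension is immediate: the set $E := \{f \in \hom(\bS^1) : f_*\eta = \eta\}$ is closed (by weak-$*$ continuity of $f \mapsto f_*\eta$) and has full $\nu$-measure, so by minimality of $X_\nu = \supp(\nu)$ one has $X_\nu \subseteq E$, hence $g_*\eta = \eta$ for every $g \in \Gamma_\nu$.

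For the existence of an atom, I would argue by contradiction. Assuming $\eta$ is non-atomic, pick two distinct points $x, y \in \supp(\eta)$ and disjoint closed arcs $I_1 = [a,b] \ni x$ and $I_2 \ni y$ with $\eta(I_1), \eta(I_2) > 0$; let $I_1^* := [b,a]$ denote the complementary arc, noting that $\eta(I_1^*) \geq \eta(I_2) > 0$. Proximality applied to the pair $a, b$ furnishes $g_n \in \Gamma_\nu$ with $d(g_n a, g_n b) \to 0$, and since $g_n(I_1)$ and $g_n(I_1^*)$ are the two arcs between $g_n a$ and $g_n b$, at least one of them must have diameter tending to $0$ along a subsequence. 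If, say, $g_n(I_1) \subseteq B(p_n, \epsilon_n)$ with $\epsilon_n \to 0$ and $p_n \to p$, then invariance gives $\eta(B(p_n, \epsilon_n)) \geq \eta(g_n(I_1)) = \eta(I_1) > 0$, so $\eta(\{p\}) > 0$, contradicting non-atomicity. The case where $g_n(I_1^*)$ collapses is symmetric, using $\eta(I_1^*) > 0$.

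Now let $m := \max_{x \in \bS^1} \eta(\{x\}) > 0$ and $A_m := \{x : \eta(\{x\}) = m\}$, a finite set. Since $\eta(\{x\}) = \eta(g^{-1}\{x\})$ and $g$ is a homeomorphism, each $g \in \Gamma_\nu$ bijectively permutes $A_m$. If $|A_m| \geq 2$, proximality applied to distinct $u, v \in A_m$ would yield $d(g_n u, g_n v) \to 0$ with $g_n u \neq g_n v$ both lying in the finite set $A_m$, a contradiction. Thus $A_m = \{z_0\}$ and $g(z_0) = z_0$ for every $g \in \Gamma_\nu$. To conclude $\eta = \delta_{z_0}$, if $m < 1$ I would form the $\Gamma_\nu$-invariant probability $\eta' := (\eta - m\delta_{z_0})/(1-m)$ and re-run the previous two stages on $\eta'$: this produces a $\Gamma_\nu$-fixed point $z_0'$ with $\eta'(\{z_0'\}) > 0$, but proximality applied to $z_0$ and $z_0'$ (both $\Gamma_\nu$-fixed) forces $z_0 = z_0'$, contradicting $\eta'(\{z_0\}) = 0$.

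The main obstacle is the atom-existence step: a homeomorphism $g_n$ contracting $a$ and $b$ could collapse either $I_1$ or its complementary arc, so one cannot control in advance which of the two images shrinks. The auxiliary arc $I_2$ is introduced precisely to ensure $\eta(I_1^*) > 0$, so that in either collapse pattern one still recovers a point of positive $\eta$-mass and reaches the desired contradiction.
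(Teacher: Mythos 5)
Your proof is correct and follows essentially the same strategy as the paper's: produce an atom by contracting an arc of positive $\eta$-measure whose complementary arc also has positive measure, then use the finiteness of the set of maximal atoms together with proximality to obtain a single atom $z_0$ fixed by all of $\Gamma_\nu$. The only divergence is the concluding step: the paper rules out extra mass by a support argument (contracting arcs adjacent to a hypothetical point of $\supp(\eta)\setminus\{z_0\}$ toward $z_0$ and showing they are $\eta$-null), whereas you subtract and renormalize the atom and rerun the argument to produce a second $\Gamma_\nu$-fixed point, which proximality forbids; both work, and your version has the added merit of making explicit the extension of invariance from $\nu$-a.e.\ generator to all of $\Gamma_\nu$, which the paper uses implicitly.
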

\begin{proof}
    Consider $x,y\in \bS^1$ such that both arcs connecting $x$ and $y$ have positive $\eta$-measure, that is, $\eta([x,y])>0$ and $\eta([y,x])>0$. By proximality, we can find a sequence $(g_n)_{n\in\N}$ in $\Gamma_{\nu}$ such that
    \begin{align*}
        \lim_{n\to\infty}d(g_n(x),g_n(y)) = 0.
    \end{align*}
    Thus, up to considering a subsequence, we see that $(g_n)_n$ contracts one of the arcs connecting $x$ and $y$. So, we may assume without loss of generality that 
    \begin{align}
    \label{eq:101123.1}
        \lim_{n\to\infty}\mbox{diam}(g_n[x,y])=0
        \quad
        \text{and}
        \quad
        \lim_{n\to\infty}d(g_n(x),z)=\lim_{n\to\infty}d(g_n(y),z)=0,
    \end{align}
    for some $z\in \bS^1$. Let us prove that $z$ must be an atom of $\eta$, consider $\varepsilon>0$, and notice that for $n\geq 1$ sufficiently large, $g_n[x,y] \subset B(z,\varepsilon)$. Using that $\eta$ is preserved by $g_n$ for every $n\geq 1$, we have 
    \begin{align*}
        \eta(B(z,\varepsilon)) \geq \eta(g_n[x,y]) = \eta([x,y]).
    \end{align*}
    This ensure a positive lower bound for the $\eta$-measure of the ball $B(z,\varepsilon)$ independently of $\varepsilon$ implying that $\eta$ has atoms.

    We claim that every atom of $\eta$ is fixed by every element in $\Gamma_{\nu}$.  Indeed, consider a point $z\in \bS^1$, with $\eta(\{z\})>0$ and define the set $A_z$ of points $x\in \bS^1$ with $\eta(\{x\}) = \eta(\{z\})$. Since $\eta$ is a probability measure, we see that $A_z$ is finite. Using the fact that every map in $\Gamma_{\nu}$ preserves $\eta$, we see that $A_z$ is $g$-invariant for every $g\in \Gamma_{\nu}$. Set 
    $$
        r=\min\{d(x,y)\colon\, x\neq y,\,x,y\in A_z\}.
    $$
    If $r>0$, then there exists $y\in A_z\backslash\{z\}$ and for every $g\in \Gamma_\nu$, we have $d(g(z),g(y))\geq r>0$, contradicting the proximality assumption. This shows that $A_z=\{z\}$ and by the invariance $g(z) = z$ for every $g\in \Gamma_{\nu}$.
    
    As a direct consequence of the above claim together with the proximality assumption, we conclude that $\eta$ has a single atom $z_0\in \bS^1$ satisfying $g(z_0) = z_0$ for every $g\in \Gamma_{\nu}$. To finish, we show that $\eta$ must coincide with $\delta_{z_0}$. If this is not the case, we can find $x\in \supp(\eta)\backslash \{z_0\}$. So, there exists $\varepsilon>0$ such that $z_0\notin B(x,\varepsilon)$ and 
    \begin{equation}\label{eq:cont-inv-p}
        \eta([x-\varepsilon,x])>0
        \quad
        \mbox{and}
        \quad
        \eta([x,x+\varepsilon])>0.
    \end{equation}
    Using proximality once more, we can find a sequence $(g_n)_n$ in $\Gamma_{\nu}$ with $g_n(x)\to z_0$ as $n\to\infty$. Without loss of generality assume that $g_n$ contracts the interval $[z,x]$. Since $g_n$ preserves $\eta$, we see that
    \begin{align*}
        \eta([z,x]) = \lim_{n\to\infty}\eta(g_n[z,x]) = \eta(\{z\}),
    \end{align*}
    which implies that $\eta((z,x]) = 0$. Since $[x-\varepsilon,x]\subset (z,x]$, we have $\eta([x-\varepsilon,x]) = 0$ contradicting \eqref{eq:cont-inv-p}. This proves that $\eta = \delta_{z_0}$.
\end{proof}

\begin{proposition}
\label{prop:noInvariantMeasure}
    If $\Gamma_{\nu}$ is proximal and does not fix any point in $\bS^1$, then there is no $\Gamma_{\nu}$-invariant measure on $\bS^1$.
\end{proposition}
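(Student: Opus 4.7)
The plan is to derive this as an essentially immediate corollary of the previous proposition, with one small extra step to upgrade from "$\nu$-almost every $g$" to "every $g \in \Gamma_\nu$". I will argue by contradiction: suppose there exists a $\Gamma_\nu$-invariant probability measure $\eta$ on $\bS^1$.

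By Proposition \ref{prop:invariantMeasureFixedPoint}, the proximality of $\Gamma_\nu$ forces $\eta = \delta_{z_0}$ for some $z_0 \in \bS^1$ satisfying $g(z_0) = z_0$ for $\nu$-almost every $g \in X_\nu$. The next step is to promote this almost-everywhere statement to a statement about every element of $\Gamma_\nu$. For this, I would observe that the set
\[
    \mathrm{Fix}(z_0) := \{\, g \in \Homeo(\bS^1) : g(z_0) = z_0 \,\}
\]
is closed, since the evaluation map $g \mapsto g(z_0)$ is continuous with respect to the topology of uniform convergence on $\Homeo(\bS^1)$. Because $\mathrm{Fix}(z_0)$ has full $\nu$-measure and is closed, it contains the topological support $X_\nu = \supp(\nu)$, and hence the entire semigroup $\Gamma_\nu$ generated by $X_\nu$ (since $\mathrm{Fix}(z_0)$ is closed under composition).

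This shows that $z_0$ is fixed by every element of $\Gamma_\nu$, contradicting the standing hypothesis that $\Gamma_\nu$ has no common fixed point. Hence no $\Gamma_\nu$-invariant probability measure can exist.

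No real obstacle is expected here; the conceptual content is entirely packed into Proposition \ref{prop:invariantMeasureFixedPoint}. The only routine point to check carefully is the topological one (closedness of the stabilizer of $z_0$ in $\Homeo(\bS^1)$ and the passage from $X_\nu$ to $\Gamma_\nu$), which is straightforward since the stabilizer is closed under composition.
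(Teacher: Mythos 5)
Your proof is correct and follows essentially the same route as the paper, which simply cites Proposition \ref{prop:invariantMeasureFixedPoint} as giving the result directly. Your extra step upgrading ``$\nu$-almost every $g\in X_\nu$'' to ``every $g\in\Gamma_\nu$'' via the closedness of the stabilizer is a valid and careful way to bridge a gap that the paper leaves implicit (its proof of the cited proposition in fact already establishes the fixed point for every element of $\Gamma_\nu$).
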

\begin{proof}
    Direct consequence of the Proposition \ref{prop:invariantMeasureFixedPoint}. 
\end{proof}
\begin{remark}
\label{rk:noInvariantMeasure}
    Under the assumption of Proposition \ref{prop:noInvariantMeasure}, we can apply the Malicet's invariance principle and conclude that the exponent of contraction and the entropy of the RDS induced by $\nu$ are non zero. 
\end{remark}

Recall the definition of a $\nu$-stationary measure given in \eqref{eq:def-nu-stat}. For a random dynamical systems on the circle, stationary measures (measures satisfying \eqref{eq:def-nu-stat}) always exist. The unicity however, is not true in general, but in our setting we can
use the results in \cite{Mal:17} to conclude that the RDS induced by $\nu$ synchronizes orbits which ensures the unicity of the stationary measure. More precisely, we say that $\nu$ is \emph{synchronizing} if for every $x,y\in\bS^1$ and $\nu^{\bN}$-almost every $\omega\in X_\nu^\bN$,
\[
    \lim_{n\to\infty}d(f_\omega^n(x), f_\omega^n(y)) = 0.
\]
\begin{proposition}
\label{prop:sync+unicityStat}
    If $\Gamma_{\nu}$ is proximal and does not fix any point in $\bS^1$, then $\nu$ is synchronizing and admits a unique stationary measure $\eta$ on $\bS^1$. Moreover, $\eta$ is non-atomic.
\end{proposition}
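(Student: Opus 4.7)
The proposition contains four assertions: synchronization, existence, uniqueness, and non-atomicity of the stationary measure. Existence is the classical Krylov--Bogolyubov fact applied to the Markov operator $\mu \mapsto \int f_*\mu\, d\nu(f)$ on the weakly compact space of Borel probability measures on $\bS^1$, and I would only mention it in passing. Synchronization follows from Malicet's theorem in \cite{Mal:17}: Proposition \ref{prop:noInvariantMeasure} rules out any $\Gamma_\nu$-invariant probability measure on $\bS^1$, and this is precisely the hypothesis under which Malicet's machinery gives that for every $x, y \in \bS^1$ and $\nu^{\bN}$-almost every $\omega$, $d(f_\omega^n(x), f_\omega^n(y)) \to 0$.

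For uniqueness, let $\eta_1, \eta_2$ be any two $\nu$-stationary measures. For every $\varphi \in C(\bS^1)$, iterating the stationarity relation gives $\int \varphi\, d\eta_i = \int\!\int \varphi(f_\omega^n(x))\, d\eta_i(x)\, d\nu^{\bN}(\omega)$ for each $n \geq 1$. Subtracting the $i=1$ and $i=2$ identities and using Fubini,
\begin{align*}
\int \varphi\, d\eta_1 - \int \varphi\, d\eta_2
= \int\!\!\int_{\bS^1 \times \bS^1} \bigl[\varphi(f_\omega^n(x)) - \varphi(f_\omega^n(y))\bigr]\, d(\eta_1 \otimes \eta_2)(x,y)\, d\nu^{\bN}(\omega).
\end{align*}
The integrand is uniformly bounded by $2\|\varphi\|_\infty$, and by synchronization together with continuity of $\varphi$ it tends to $0$ almost everywhere as $n \to \infty$. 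Dominated convergence then forces the left-hand side to vanish; since $\varphi$ is an arbitrary continuous test function, $\eta_1 = \eta_2$.

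For non-atomicity I would argue by contradiction. If $\eta$ had an atom, let $a^* := \max_{x\in\bS^1}\eta(\{x\}) > 0$, which is attained on a finite non-empty set $A := \{x : \eta(\{x\}) = a^*\}$. The singleton version of stationarity, $\eta(\{z\}) = \int \eta(\{f^{-1}(z)\})\, d\nu(f)$, combined with maximality of $a^*$, forces $f^{-1}(z) \in A$ for $\nu$-a.e. $f$ whenever $z \in A$; finiteness of $A$ and injectivity of $f$ then yield $f(A) = A$ for $\nu$-a.e. $f$. The set of homeomorphisms preserving $A$ setwise is closed in $\hom(\bS^1)$, so it contains the topological support $X_\nu$, and hence every $g \in \Gamma_\nu$. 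If $|A| \geq 2$, proximality applied to two distinct points $x,y \in A$ yields $g_n \in \Gamma_\nu$ with $d(g_n(x), g_n(y)) \to 0$, which is impossible because $g_n(x), g_n(y)$ remain distinct points of the finite set $A$, whose pairwise distance is bounded below. Thus $A = \{z_0\}$ is a singleton fixed by every element of $\Gamma_\nu$, contradicting the no-common-fixed-point hypothesis.

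The delicate step is non-atomicity, because of the closure argument promoting $\nu$-a.e.\ invariance of $A$ to full invariance under the generated semigroup $\Gamma_\nu$ before proximality can be applied. Synchronization reduces to invoking \cite{Mal:17}, and the uniqueness step is a standard coupling argument via the tensor product $\eta_1 \otimes \eta_2$.
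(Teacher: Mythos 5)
Your proposal is correct and follows the same overall route as the paper: synchronization is delegated to Malicet's theorem via the absence of a $\Gamma_\nu$-invariant measure (Proposition \ref{prop:noInvariantMeasure}), uniqueness comes from synchronization plus dominated convergence (your coupling argument with $\eta_1\otimes\eta_2$ is essentially the content of the reference to \cite{Ste:2012} in the paper), and non-atomicity is ruled out by showing an atom would force a common fixed point. The one place where you genuinely add something is the non-atomicity step: the paper merely says to ``follow the reasoning'' of Proposition \ref{prop:invariantMeasureFixedPoint}, but that argument uses $\Gamma_\nu$-invariance of the measure, which a stationary measure does not have; your adaptation via the maximal atom mass $a^*$, the averaged identity $\eta(\{z\})=\int\eta(\{f^{-1}(z)\})\,\dd\nu(f)$, and the closedness of the stabilizer of the finite set $A$ to pass from $\nu$-a.e.\ invariance to invariance under all of $\Gamma_\nu$ is exactly the missing detail, and it is carried out correctly.
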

\begin{proof}
     Using Proposition \ref{prop:noInvariantMeasure} and the results \cite[Theorem A and Proposition 4.18]{Mal:17}, we conclude that $\nu$ is synchronizing. Therefore, by dominated convergence theorem and \cite[Proposition 1]{Ste:2012}, we can conclude the uniqueness of the stationary measure without difficulty. Consider the $\nu$-stationary measure $\eta$.
     Following the reasoning in the proof of Proposition \ref{prop:invariantMeasureFixedPoint}, we observe that if $\eta$ has an atom, this atom must be fixed for every map in $\Gamma_{\nu}$ which contradicts our assumption.
\end{proof}
\begin{example}
\label{ex:proximalNonUniquelyErgodic}
    In general, proximal systems may have more than one ergodic stationary measure. For instance, consider the two matrices
\begin{align*}
    A = \begin{pmatrix}
        1 & 1 \\
        0 & 1
    \end{pmatrix}
    \quad
    \text{and}
    \quad
    B = \begin{pmatrix}
        1/2 & 0 \\
        0 & 2
    \end{pmatrix},
\end{align*}
and define the probability measure $\nu = p_1\delta_A + p_2\delta_B$ on $\hom(\bS^1)$, with $p_1,p_2>0$ (for the sake of this example, we identify $\bS^1$ with the projective space). Notice that the horizontal direction $x_0\in \bS^1$ is fixed by the $\Gamma_{\nu}  = \langle A, B\rangle^+$ action. Hence, the Dirac measure $\hat \eta = \delta_{x_0}$ is a $\nu$-stationary measure satisfying that
\[
    \iint \log\norm{gv}\, d\nu(g)d\hat \eta(v) = -p_2\log 2<0.
\]
However, the top Linear Lyapunov exponent of the linear cocycle generated by $\nu$ may be achieved by some $\nu$-stationary measure (see \cite[Proposition 6.7]{Via:14}, that is, there exists an ergodic $\nu$-stationary measure on $\bS^1$ such that
\[
    \iint \log\norm{gv}\, d\nu(g)d\eta(v) = p_2\log 2 >0.
\]
In particular, there is more than one $\nu$-stationary measures. 
\end{example}

The following lemma shows that synchronization for forward compositions also implies synchronization for reverse-order compositions. Although at finite time both forward and reverse compositions are equally distributed, we will later see that their asymptotic behavior differs: forward synchronization leads to accumulation points that fill the entire support of the stationary measure, while for the reverse compositions, for fixed 
$\omega$, the accumulation points reduce to a single point 
$\pi(\omega)$, which plays the role of an attractor in that setting.
\begin{lemma}
\label{lem:backward-from-forward}
Assume that for every $x,y\in\bS^1$,
\[
\nu^\bN\text{-almost every }\omega\in X_\nu^\bN:\quad \lim_{n\to\infty} d\!\left(f_\omega^n(x),f_\omega^n(y)\right)=0.
\]
Then for every $x,y\in\bS^1$,
\[
\nu^\bN\text{-almost every }\omega\in X_\nu^\bN:\quad \lim_{n\to\infty} d\!\left(\overline{f_\omega^n}(x),\overline{f_\omega^n}(y)\right)=0.
\]
\end{lemma}
\begin{proof}
Fix $x,y\in\bS^1$ and define
\[
    \cA:=\{\omega:\ \limsup_{n\to\infty} d\!\left(\overline{f_\omega^n}(x),\overline{f_\omega^n} (y)\right)>0\}.
\]
Our goal is to show that $\nu^\bN(\cA)=0$. Indeed, first note that
\[
    \cA=\bigcup_{q\in \bQ\cap(0,1]}\!\! \cA_q,
    \qquad
    \cA_q:=\{\omega:\ \LimS_{n\to\infty} d\!\left(\overline{f_\omega^n} (x),\overline{f_\omega^n}(y)\right)> q \}.
\]
Hence it suffices to prove $\nu^\bN(\cA_q)=0$ for each $q\in\bQ\cap(0,1]$. Fix $q\in \bQ\cap (0,1]$ and write
\[
    \overline{E}_n := \{\omega\colon\, d(\overline{f_\omega^n} (x),\overline{f_\omega^n} (y)) > q\}
    \quad
    \text{and}
    \quad
    E_n := \{\omega\colon\, d(f_\omega^n (x), f_\omega^n (y)) > q\}.
\]
Notice that for every $N\in\bN$, $\cup_{n\geq N}\overline{E}_n$ and $\cup_{n\geq N}\overline{E}_n$ are the limits of the increasing sequences $(\cup_{N\leq n\leq m} E_n)_{m\geq N}$ and $(\cup_{N\leq n\leq m} \overline{E}_n)_{m\geq N}$, respectively.

For each $\omega =(f_1,f_2,\ldots)\in \cup_{n = N}^m {E}_n$, set $n(\omega)=\min\{n\in\{N,\ldots,m\}\colon \omega\in E_n\}$ and define the sequence $\phi(\omega)=(f_{n(\omega)},f_{n(\omega)-1},\ldots,f_1,f_{n(\omega)+1},\ldots)$. Note that $\phi(\omega)\in \overline{E}_n\subset\cup_{n = N}^m \overline{E}_n$. Let $X_k := \{\omega\colon\, n(\omega) = k\}$ and notice that $\cup_{n=N}^m\phi(X_k) = \cup_{n=N}^m \overline{E}_n$. Hence, using that $\nu^{\N}$ is a product measure, we get
\[
    \nu^{\N}(\cup_{n=N}^m \overline{E}_n)
    = \nu^{\N}(\cup_{n=N}^m \phi(X_k))
    \leq \sum_{n=N}^m \nu^{\N}(\phi(X_k)) = \sum_{n=N}^m\nu^{\N}(X_k) = \nu^{\N}(\cup_{n=N}^m E_n).
\]
Analogously, the reverse inequality is proved, from which we conclude that
\[
    \nu^{\N}(\cup_{n = N}^m E_n)=\nu^{\N}(\cup_{n = N}^m \overline{E}_n).
\]

Then, by monotone convergence theorem,
\[
    \nu^{\N}(
        \cup_{n\geq N} {E}_n
    ) = \nu^{\N}(
        \cup_{n\geq N} \overline{E}_n
    ).
\]
Making $N\to \infty$, and using the assumption, we conclude that 
\[
    \nu^\bN(\cA_q)=\nu^\bN (\{
        \omega:\,
        \limsup_{n\to\infty}d\!\left(f_\omega^n(x),f_\omega^n(y)\right)> q
    \}) = 0,
\]
for every $q\in \bQ\cap (0,1]$. This concludes the proof of the lemma.
\end{proof}

By \cite[Theroem A]{Mal:17}, the absence of a probability measure which is simultaneously invariant by every map in $\Gamma_\nu$ implies local exponential contraction. More precisely, there exists $q\in (0,1)$ such that for every $x\in\bS^1$ and $\nu^\bN$-almost every $\omega\in X_\nu^\bN$ there exists an interval $I\subset\bS^1$ with $x\in\Int(I)$ such that
\[
\diam f_\omega^n(I)\leq q^n.
\]
In this case, we say that $\nu$ is \emph{locally contracting} with \emph{local contraction rate} $q$.

For the rest of this work, for $x,y\in \bS^1$, $x\neq y$, we write $[x,y]$ and $[y,x]$, for the two arcs connecting $x$ and $y$ in $\bS^1$.

The next Proposition is a refinement of \cite[Lemma 5.3]{GelSal:23}. It ensures not only synchronization with an exponential rate for points $x,y\in \bS^1$, but also that one of the arcs connecting $x$ and $y$, namely $[x,y]$ or $[y,x]$, will be contracted with the same rate. 
\begin{proposition}
\label{prop:syncofintervals}
    Assume that $\Gamma_{\nu}$ is proximal and does not fix any point in $\bS^1$. Let $q\in(0,1)$ be the associated local contraction rate. Then, for every $x,y\in\bS^1$ and $\nu^{\bN}$-almost every $\omega\in X_\nu^\bN$
    \[
        \lim_{n\to\infty}\frac 1n \log \diam f_\omega^n([x,y])\leq \log q
        \quad
        \mbox{or}
        \quad
        \lim_{n\to\infty}\frac 1n \log \diam f_\omega^n([y,x])\leq \log q.
    \]
\end{proposition}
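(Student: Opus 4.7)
My plan is to cover one of the two arcs $[x,y]$, $[y,x]$ by finitely many $\omega$-contracting intervals (furnished by Malicet's local contraction property), then bound the diameter of $f_\omega^n(\text{arc})$ by the sum of the diameters of the images of the covering intervals.

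Setup: fix a countable dense subset $D\subset\bS^1$ containing $x$ and $y$. For each $z\in D$, local contraction yields, $\nu^{\bN}$-almost surely, an interval $I_z(\omega)\ni z$ with $\diam f_\omega^n(I_z(\omega))\leq q^n$ for every $n$; intersecting the countable family of full-measure events, we may assume this holds simultaneously for every $z\in D$. Define $G(\omega):=\bigcup_{z\in D}I_z(\omega)$, an open set, and $B(\omega):=\bS^1\setminus G(\omega)$, which is closed and Lebesgue-null $\nu^{\bN}$-a.s.\ by Fubini.

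The heart of the argument is the intermediate claim that $|B(\omega)|\leq 1$ almost surely. First, shift-covariance $B(\omega)=f_1^{-1}(B(\sigma\omega))$ (up to a harmless adjustment of the rate $q$) follows because a contracting interval at $f_1(z)$ for $\sigma\omega$ pulls back through $f_1^{-1}$ to a contracting interval at $z$ for $\omega$. Since $f_1$ is a homeomorphism, $|B(\omega)|$ is $\sigma$-invariant and, by ergodicity of $(X_\nu^{\bN},\nu^{\bN},\sigma)$, a.s.\ constant, say $K\in\N\cup\{\infty\}$. The case $K=0$ is excluded since $G(\omega)=\bS^1$ would admit a finite cover of $\bS^1$ by contracting intervals, forcing $\diam\bS^1\to 0$. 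For $K\geq 2$, measurable selection provides distinct $p_1(\omega)\neq p_2(\omega)$ in $B(\omega)$; their joint law $\lambda$ is a $\nu$-stationary measure on $\bS^1\times\bS^1$. Iterating the stationarity identity and using the pointwise synchronization from Proposition \ref{prop:sync+unicityStat} together with dominated convergence yields $\int d(a,b)\,\dd\lambda(a,b)=0$, so $\lambda$ is concentrated on the diagonal, contradicting $p_1\neq p_2$. Hence $K=1$, and $B(\omega)=\{\theta(\omega)\}$ for some measurable random point.

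Granted the claim, at least one of the open arcs $(x,y)$, $(y,x)$ avoids $\theta(\omega)$; without loss of generality $[x,y]\subset G(\omega)$, the endpoints $x,y$ being in $G(\omega)$ since they lie in $D$. By compactness extract a finite subcover $I_{z_1}(\omega),\ldots,I_{z_k}(\omega)$ of $[x,y]$ arranged as a chain of consecutively overlapping open intervals. The images $f_\omega^n(I_{z_j}(\omega))$ overlap in the same combinatorial pattern and each has diameter at most $q^n$; for $n$ large enough that $kq^n<1/2$ the chain cannot wrap around $\bS^1$, so $\diam f_\omega^n([x,y])\leq kq^n$. Taking logarithms gives
\[
\limsup_{n\to\infty}\frac{1}{n}\log\diam f_\omega^n([x,y])\leq \log q,
\]
as required. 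The main obstacle is the intermediate claim $|B(\omega)|\leq 1$, and specifically the use of proximality via the joint law of two hypothetical distinct bad points to force concentration on the diagonal.
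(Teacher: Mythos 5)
Your overall strategy (first show that the set of points without a contracting neighborhood is $\nu^{\bN}$-a.s.\ a single point, then cover one of the two arcs by finitely many contracting intervals) is a legitimate alternative to the paper's argument, and your reduction of the proposition to the claim $\card B(\omega)\leq 1$ is fine, as is the exclusion of $K=0$ and the final chain-of-intervals estimate. The gap is in the exclusion of $K\geq 2$. Your set $B(\omega)$ is \emph{forward}-equivariant: $f_1(B(\omega))=B(\sigma\omega)$ (with $f_\omega^n=f_n\circ\cdots\circ f_1$). The law of an equivariant random point $p$ with $f_1(p(\omega))=p(\sigma\omega)$, i.e.\ $p(\omega)=f_1^{-1}(p(\sigma\omega))$ with $f_1$ independent of $\sigma\omega$, is a stationary measure for the \emph{inverse} system $\nu^-$, not for $\nu$ — compare Corollary \ref{cor01:teo01}, where $\int\delta_{\theta(\omega)}\,\dd\nu^{\bN}$ is precisely the $\nu^-$-stationary measure $\eta^-$. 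The same holds for the joint law $\lambda$ of an (equivariantly chosen) pair of points of $B(\omega)$: iterating its stationarity identity produces compositions $f_n^{-1}\circ\cdots\circ f_1^{-1}$, so to force $\lambda$ onto the diagonal you would need synchronization for $\nu^-$, which is not available here. (You cannot instead write $\int d\,\dd\lambda=\int\int d(f_\omega^n(a),f_\omega^n(b))\,\dd m_\omega(a,b)\,\dd\nu^{\bN}(\omega)$ and invoke Proposition \ref{prop:sync+unicityStat}, because the disintegration $m_\omega$ depends on $\omega$ and the null set in the synchronization statement depends on the pair $(a,b)$; Fubini only handles product measures.) Establishing that $\nu^-$ synchronizes requires proximality of $\Gamma_{\nu^-}$, which in the paper is Proposition \ref{prop:proxInverseSystem} and is itself deduced from the existence of $\theta$, hence ultimately from the proposition you are proving — so the natural repair is circular.

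This is exactly the difficulty the paper's proof is built to avoid: it works with the genuinely $\nu$-stationary measure $\mu$ of the product system on $\bS^1\times\bS^1$ supplied by Furstenberg's theorem (which is concentrated on the diagonal and projects to $\eta$), shows that the ``good'' set $\mathcal{E}$ of triples $(\omega,x,y)$ has positive $\nu^{\bN}\otimes\mu$-measure using local contraction on $\supp\eta$, and then invokes Malicet's Proposition 4.2 to upgrade this to $(\nu^{\bN}\otimes\delta_{(x,y)})(\mathcal{E})=1$ for \emph{every} pair $(x,y)$. You either need to import such a zero--one/upgrading principle for your set $B(\omega)$, or find a genuinely different argument that two forward-equivariant bad points cannot coexist. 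As a minor secondary point, for the cardinality of $B(\omega)$ to be exactly $\sigma$-invariant you should define $G(\omega)$ intrinsically (existence of \emph{some} neighborhood whose images have diameter tending to $0$) rather than as a union of rate-$q$ intervals attached to points of $D$; and when $K\geq2$ the two selected points must be chosen equivariantly (e.g.\ via the uniform measure on ordered pairs of distinct points of $B(\omega)$ when $K<\infty$) for their joint law to satisfy any stationarity identity at all.
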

\begin{proof}
    Let $\eta$ be the $\nu$-stationary measure on $\bS^1$.
    By \cite[Theorem 14.1]{Fur:73}, the product system $\omega\mapsto(f_\omega,f_\omega)$ on $\bS^1\times \bS^1$, admits an unique stationary measure $\mu$ on $\bS^1\times \bS^1$ such that
    \begin{equation}\label{eq01-lemma:syncofintervals}
        \mu(\{(x,x)\colon\, x\in\bS^1\})=1
        \quad
        \text{and}
        \quad
        \mu(\cdot\times \bS^1)=\eta(\cdot)=\mu(\bS^1\times \cdot).
    \end{equation}
    
    Consider the set $\mathcal{E}$ of points $(\omega,x,y)\in X_\nu^\bN\times\bS^1\times\bS^1$ satisfying the following conditions:
    \begin{enumerate}
        \item[(a)] there exist open neighborhoods $I_x$ of $x$ and $I_y$ of $y$ such that $\diam f_\omega^n(I_x)\to 0$ and $\diam f_\omega^n(I_y)\to 0$ as $n\to \infty$;
    
        \item[(b)] we have $\lim_{n\to\infty}\diam f_\omega^n([x,y])=0$ or $\lim_{n\to\infty}\diam f_\omega^n([y,x])= 0$.
    \end{enumerate}
    Note that if $(\omega,x,y)\in\mathcal{E}$ then $\{\omega\}\times I_x\times I_y\in \mathcal{E}$. So, we can write
    \[
        \mathcal{E}=\bigcup_{\omega\in X_\nu^\bN}
        (\{\omega\}\times U(\omega)),
    \]
    where $U(\omega)$ is an open subset of $\bS^1\times\bS^1$. Our goal is to apply \cite[Proposition 4.2]{Mal:17} with the set $\mathcal{E}$ and that it remains to check that $\nu^{\N}\otimes\mu(\mathcal{E}) > 0$. Notice that, by local contraction, for each $x\in \supp\eta$, we can find we can find an interval $I\subset \bS^1$ containing $x$ and a $\nu^{\N}$-full measure set $\Omega_I\subset X_{\nu}^{\N}$ such that if $\omega\in \Omega_I$, $\diam f^n_{\omega}(I)\to 0$ as $n\to\infty$. In particular, $\Omega_I\times I\times I\subset \mathcal{E}$ which implies that
    \[
        (\nu^\bN\otimes\mu)(\mathcal{E})
        \geq (\nu^\bN\otimes\mu)( \Omega_I\times I\times I) > 0.
    \]
    Hence, we can use \cite[Proposition 4.2]{Mal:17} and conclude that for every $x,y\in\bS^1$, we have $(\nu^\bN\otimes\delta_{(x,y)})(\mathcal{E})=1$. In particular, for every $x,y\in\bS^1$ and for $\nu^\bN$-almost every $\omega\in X_\nu^\bN$ 
    \[
        \lim_{n\to\infty}\diam f_\omega^n([x,y])=0 ,\quad\mbox{or}\quad \lim_{n\to\infty}\diam f_\omega^n([y,x])=0.
    \]
    The fact that the last convergence is exponential follows directly from the exponential synchronization as proved in \cite[Lemma 5.3]{GelSal:23}.
\end{proof}

\subsection{Random repelling point}
Let us state the following result where we first introduced a characterization of the map $\theta$ in Theorem \ref{teo:01}.
\begin{proposition}
\label{prop:NoSyncSection02}
    Assume that $\nu$ is proximal and that $\Gamma_\nu$ does not fix any point of $\bS^1$. Let $q\in(0,1)$ be the associated local contraction rate. Then there exists a measurable map $\theta\colon X_\nu^\bN\to\bS^1$ such that for $\mu$-almost every $\omega\in X_\nu^\bN$, for every interval $I\subset\bS^1\backslash\{\theta(\omega)\}$ and for every $x\neq\theta(\omega)$ we have
    \[
        \limsup_{n\to\infty}\frac{1}
        {n}\log \diam f^n_{\omega}(I) \leq \log q.
    \]
\end{proposition}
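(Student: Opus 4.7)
The plan is to identify $\theta(\omega)$ as the unique point lacking an open contracting neighborhood. Set
\[
G(\omega) := \bigl\{z\in\bS^1 \,:\, \exists\text{ open arc } I\ni z \text{ with } \diam f_\omega^n(I)\to 0\bigr\},
\]
an open random set, and let $B(\omega) := \bS^1\setminus G(\omega)$. The strategy is to show that $B(\omega)$ is a singleton $\nu^{\bN}$-a.s., define $\theta(\omega)$ to be its unique element, and then verify both stated conclusions.

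That $B(\omega)\ne\emptyset$ a.s.\ is a compactness argument: otherwise finitely many contracting open arcs $I_1,\ldots,I_m$ cover $\bS^1$, and since $f_\omega^n$ is a homeomorphism, $\sum_j |f_\omega^n(I_j)|\ge 1$; but for $n$ large each $f_\omega^n(I_j)$ has diameter $<1/4$, so its length equals its diameter, and the sum tends to $0$---a contradiction. The heart of the proof is uniqueness. Fix a countable dense $Q\subset\bS^1$ and apply Proposition~\ref{prop:syncofintervals} to every pair in $Q\times Q$; the intersection of the resulting full-measure sets is a $\nu^{\bN}$-full measure set $\Omega_0$ on which, for every $\omega$ and every $q,q'\in Q$, one of the two arcs between $q$ and $q'$ has $f_\omega^n$-diameter tending to $0$. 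Suppose for contradiction that some $\omega\in\Omega_0$ admits distinct points $x_0,y_0\in B(\omega)$. Set $\varepsilon:=d(x_0,y_0)/2$ and pick $q_1\in(x_0,x_0+\varepsilon)\cap Q$ and $q_4\in(x_0-\varepsilon,x_0)\cap Q$. Between $q_1$ and $q_4$ there is a short arc $[q_4,q_1]$ containing $x_0$ in its interior and disjoint from $y_0$, and a long arc $[q_1,q_4]$ containing $y_0$ in its interior and disjoint from $x_0$. Proposition~\ref{prop:syncofintervals} forces at least one of them to contract, but either outcome provides an open neighborhood of $x_0$ or $y_0$ (taken inside the contracting arc) whose image has diameter tending to $0$---placing one of $x_0,y_0$ in $G(\omega)$, a contradiction.

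With $|B(\omega)|=1$ a.s., define $\theta(\omega)$ to be the unique element of $B(\omega)$; measurability is standard for closed random sets. For the first conclusion, a closed interval $I\subset\bS^1\setminus\{\theta(\omega)\}$ is compact and covered by finitely many open arcs witnessing $G(\omega)$, each necessarily disjoint from $\theta(\omega)$; the connected component of their union containing $I$ is a chain of overlapping small arcs whose joint image has diameter bounded by the sum of the individual image diameters, hence tending to $0$. For the second conclusion, suppose $\limsup_n d(f_\omega^n(x), f_\omega^n(\theta(\omega)))=0$ for some $x\ne\theta(\omega)$. For small $\delta>0$, apply the first conclusion to the two closed arcs from $x$ to $\theta(\omega)\pm\delta$ that avoid $\theta(\omega)$, and combine with the hypothesis to deduce $d(f_\omega^n(\theta(\omega)\pm\delta), f_\omega^n(\theta(\omega)))\to 0$; the image of the open arc $(\theta(\omega)-\delta,\theta(\omega)+\delta)$ is then an arc through $f_\omega^n(\theta(\omega))$ with both endpoints approaching it, so its diameter tends to $0$, placing $\theta(\omega)\in G(\omega)$---contradiction. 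The main difficulty is the uniqueness step: Proposition~\ref{prop:syncofintervals} gives the arc-contraction dichotomy only pairwise, with a null set depending on $(x,y)$, so one cannot directly apply it to random points $(x_0(\omega),y_0(\omega))$; the trick is the combinatorial choice of the pair $(q_1,q_4)$ from the countable dense set $Q$ that straddles $x_0$ while avoiding $y_0$, forcing both alternatives of the dichotomy into contradiction.
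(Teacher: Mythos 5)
Your construction of $\theta(\omega)$ as the unique point of $B(\omega)=\bS^1\setminus G(\omega)$, the non-emptiness and uniqueness arguments (in particular the straddling choice of $q_1,q_4$ from the countable dense set), and the derivation of the first conclusion by a chain of contracting arcs are all correct, and they amount to essentially the same mechanism as the paper's proof, which applies Proposition \ref{prop:syncofintervals} over a countable dense set and identifies $\theta(\omega)$ as $\sup I_+=\inf I_-$ for the sets $I_\pm$ of points whose arc to a fixed base point contracts.

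The proof of the second conclusion, however, has a genuine gap at its final step. From $d(f_\omega^n(\theta(\omega)\pm\delta),f_\omega^n(\theta(\omega)))\to0$ you infer that the image of $(\theta(\omega)-\delta,\theta(\omega)+\delta)$, being an arc containing $f_\omega^n(\theta(\omega))$ with both endpoints approaching that point, has diameter tending to $0$. This inference is false: an arc can contain a point $c$ and have both endpoints within $\varepsilon$ of $c$ while being the \emph{long} arc (the arc from $1/n$ to $2/n$ that winds through $0$ contains $0$, has both endpoints within $2/n$ of $0$, and has diameter $1/2$). Worse, in your situation the image of $(\theta(\omega)-\delta,\theta(\omega)+\delta)$ is forced to be the long arc: its complement is the image of the closed arc $[\theta(\omega)+\delta,\theta(\omega)-\delta]$, which avoids $\theta(\omega)$ and hence contracts by your own first conclusion, so $\diam f_\omega^n\bigl((\theta(\omega)-\delta,\theta(\omega)+\delta)\bigr)\to 1/2$ no matter whether $d(f_\omega^n(x),f_\omega^n(\theta(\omega)))\to0$ or not. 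So no contradiction is reached. The deeper issue is that the second conclusion does not follow from the purely topological facts you have established ($\theta(\omega)\notin G(\omega)$ together with conclusion~1): these are compatible with a scenario in which, along a subsequence, all the expansion concentrates on a one-sided neighborhood $(\theta(\omega),\theta(\omega)+\delta)$ while $[\theta(\omega)-\delta,\theta(\omega)]$ contracts, in which case $f_\omega^{n_k}(\theta(\omega))$ would synchronize with $f_\omega^{n_k}(x)$ for every fixed $x$. Excluding this requires additional dynamical input, for instance the equivariance $f_\omega^n(\theta(\omega))=\theta(\sigma^n\omega)$ combined with distributional information about $\theta$, or a finer use of the two-point motion; that ingredient is missing from your argument.
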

\begin{proof}
Consider a countable dense subset $Q$ of $\bS^1$.
By Proposition \ref{prop:syncofintervals}, for $\nu^\bN$-almost every $\omega\in X_\nu^\bN$ and every $x,y\in Q$
\[
    \limsup_{n\to\infty}\frac{1}
        {n}\log\diam f_\omega^n([x,y]) \leq \log q
        \quad
        \mbox{or}
        \quad
        \limsup_{n\to\infty}\frac{1}
        {n}\log\diam f_\omega^n([y,x])\leq \log q.
    \]
    Given $\omega\in X_\nu^\bN$ typical, fix $z\in Q$ and set
    \begin{align*}
        I_{+}&=\{x\in \bS^1\colon
            \limsup_{n\to\infty}\frac{1}
        {n}\log\diam f_\omega^n([z,x])\leq \log q
        \},
        \quad
        \mbox{and}\\
        I_{-}&=\{x\in \bS^1\colon
            \limsup_{n\to\infty}\frac{1}
        {n}\log\diam f_\omega^n([x,z])\leq \log q
        \}.
    \end{align*}
    Since $Q\subset I_{+}\cup I_{-}$ is dense, there exists an unique point $\theta(\omega)$ in $\bS^1\backslash Q$ such that 
    \[
    \theta(\omega)=\sup I_{+}=\inf I_{-}.
    \]
    Furthermore, if $x\neq \theta(\omega)$, we have $x\in I_{+}\cup I_{-}$.   
\end{proof}

Let $\Gamma_\nu$ be a semigroup on $\hom(\bS^1)$ generated by $X_\nu$. Here, we assuming that $\Id_{\bS^1}$ is an element of $\Gamma_\nu$.
\begin{corollary}\label{cor01:prop:NoSyncSection}
    Assume that $\Gamma_{\nu}$ is proximal and does not fix any point in $\bS^1$. Let $\eta$ be the $\nu$-stationary measure. Then there exists a sequence $(g_n)_{n\in\bN}$ in $\Gamma_\nu$ and a point $z\in \bS^1$ such that
    in weak${}^\ast$ topology
    \[
    \lim_{n\to\infty} (g_n)_\ast\eta=\delta_z.
    \]    
\end{corollary}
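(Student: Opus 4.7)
The plan is to use the map $\theta$ from Proposition \ref{prop:NoSyncSection02} together with the non-atomicity of the stationary measure (Proposition \ref{prop:sync+unicityStat}). Fix a sequence $\omega=(f_n)_{n\in\bN}\in X_\nu^\bN$ that is typical in the sense of Proposition \ref{prop:NoSyncSection02}, and set $\theta_0:=\theta(\omega)$. Because $\mathrm{Id}_{\bS^1}\in\Gamma_\nu$, each composition $f^n_\omega=f_n\circ\cdots\circ f_1$ belongs to $\Gamma_\nu$, so any sequence extracted from $\{f^n_\omega\}_{n\in\bN}$ will lie in $\Gamma_\nu$.

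For every $k\in\bN$ consider the closed arc
\[
K_k:=\bS^1\setminus\bigl(\theta_0-\tfrac{1}{k},\theta_0+\tfrac{1}{k}\bigr),
\]
which is contained in $\bS^1\setminus\{\theta_0\}$. By Proposition \ref{prop:NoSyncSection02} (applied to the two intervals making up $K_k$), $\diam f^n_\omega(K_k)\to 0$ as $n\to\infty$. Hence I can choose $N_k\in\bN$ and a point $z_k\in f^{N_k}_\omega(K_k)$ so that $f^{N_k}_\omega(K_k)\subset B(z_k,1/k)$. Passing to a subsequence (and relabeling), I may assume $z_k\to z$ for some $z\in\bS^1$. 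Set $g_k:=f^{N_k}_\omega\in\Gamma_\nu$.

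It remains to show $(g_k)_\ast\eta\to\delta_z$ in the weak$^\ast$ topology. Given $\varepsilon>0$, for $k$ large enough one has $B(z_k,1/k)\subset B(z,\varepsilon)$, so
\[
(g_k)_\ast\eta\bigl(B(z,\varepsilon)\bigr)\geq \eta(K_k).
\]
By Proposition \ref{prop:sync+unicityStat}, $\eta$ has no atoms, so $\eta(\{\theta_0\})=0$ and therefore $\eta(K_k)\to 1$ as $k\to\infty$. Consequently $(g_k)_\ast\eta(B(z,\varepsilon))\to 1$ for every $\varepsilon>0$, which is equivalent to weak$^\ast$ convergence to $\delta_z$: for any $\varphi\in C(\bS^1)$ and any $\varepsilon>0$, splitting the integral between $B(z,\varepsilon)$ and its complement gives
\[
\Bigl|\textstyle\int\varphi\,\dd (g_k)_\ast\eta-\varphi(z)\Bigr|
\leq \sup_{B(z,\varepsilon)}|\varphi-\varphi(z)|+2\|\varphi\|_\infty\bigl(1-(g_k)_\ast\eta(B(z,\varepsilon))\bigr),
\]
and letting first $k\to\infty$ and then $\varepsilon\to 0$ yields the claim.

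There is no real obstacle here; the work was done in Proposition \ref{prop:NoSyncSection02}. The only point to be careful about is that $K_k$ is a union of two arcs (not a single arc) disjoint from $\theta_0$, but each arc is contracted by $f^n_\omega$ to diameter zero, hence so is their union, which is what lets us trap the $\eta$-mass of $K_k$ inside a shrinking ball.
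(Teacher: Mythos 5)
Your proof is correct and follows essentially the same route as the paper: fix an $\omega$ typical for Proposition \ref{prop:NoSyncSection02}, use the contraction of arcs avoiding $\theta(\omega)$ together with the non-atomicity of $\eta$ from Proposition \ref{prop:sync+unicityStat}, extract a convergent subsequence by compactness, and conclude weak${}^\ast$ convergence (the paper invokes dominated convergence where you give a direct mass-concentration estimate, which is equivalent). Your closing worry is moot anyway, since $K_k$ is a single closed arc in $\bS^1\setminus\{\theta(\omega)\}$, so Proposition \ref{prop:NoSyncSection02} applies to it directly.
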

\begin{proof}
    Fix $\omega\in X_\nu^{\bN}$ typical such that the conclusion in Proposition \ref{prop:NoSyncSection02} holds. By Proposition \ref{prop:sync+unicityStat}, $\eta$ is non-atomic and hence $\eta(\{\theta(\omega)\})=0$. By compactness, we can find a subsequence $(n_k)_{k\in\bN}$ of natural numbers and $z\in \bS^1$, such that $f^{n_k}_{\omega}(x) \to z$ as $k\to\infty$, for all $x\neq\theta(\omega)$. We can also assume that the sequence of probability measures $((f_{\omega}^{n_k})_\ast\eta)_{k\in\bN}$ converges in the weak-$\ast$ topology. By dominated convergence, we can conclude that $(f_{\omega}^{n_k})_\ast\eta$ converges in the weak-$\ast$ topology to $\delta_z$ as $k\to \infty$. This concludes the prove of the corollary.
\end{proof}

\subsection{Random attracting point}
The next proposition ensures the existence of the point point $\pi$ on $\bS^1$ that plays the role of the unstable Oseledets direction. Recall that for $\omega = (f_j)_{j\in\N}\in X_{\nu}^{\N}$, we write $\overline{f^n_{\omega}} = f_1\circ f_2\circ\cdots\circ f_n$.
\begin{proposition}
\label{prop:NoSyncSection01}
    Assume that $\Gamma_{\nu}$ is proximal and does not fix any point in $\bS^1$. Let $\eta$ be the $\nu$-stationary measure. Then there exists a measurable map $\pi: X_\nu^\bN\to\bS^1$ such that $\nu^\bN$-almost every $\omega\in X_\nu^\bN$ we have
    \[
        \lim_{n\to\infty}\overline{f_{\omega}^n}_\ast \eta=\delta_{\pi(\omega)}.
    \]
\end{proposition}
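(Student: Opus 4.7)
The plan is to construct $\pi$ via a martingale argument based on the unique $\nu$-stationary measure $\eta$ from Proposition~\ref{prop:sync+unicityStat}, and then to upgrade the resulting convergence of measures into a pointwise convergence of orbits. For the first step, I consider the random probability measures $M_n(\omega)\eqdef(\overline{f^n_\omega})_\ast\eta$. Writing $\overline{f^{n+1}_\omega}=\overline{f^n_\omega}\circ f_{n+1}$, using the independence of $f_{n+1}$ from $\mathcal{F}_n\eqdef\sigma(f_1,\dots,f_n)$, and using the stationarity $\int f_\ast\eta\,\dd\nu(f)=\eta$, one checks directly that $\mathbb{E}[M_{n+1}\mid\mathcal{F}_n]=M_n$, i.e.\ $(M_n)$ is a measure-valued martingale. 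Testing against a countable dense family of continuous functions and applying the bounded martingale convergence theorem to each scalar martingale $\omega\mapsto\int\varphi\,\dd M_n$, one obtains weak-$\ast$ convergence $M_n\to M_\infty$ for $\nu^\bN$-a.e.\ $\omega$, for some random probability measure $M_\infty$ on $\bS^1$.

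The next step is to show that $M_\infty$ is almost surely a Dirac mass. I would look at the weak-$\ast$ continuous bounded functional
\[
V(\mu)\eqdef\iint d(u,v)^2\,\dd\mu(u)\,\dd\mu(v),
\]
which vanishes precisely on Dirac masses. Continuity gives $V(M_n)\to V(M_\infty)$ a.s.; since $\overline{f^n_\omega}$ and $f^n_\omega$ have the same distribution for each fixed $n$,
\[
\mathbb{E}[V(M_n)]=\iint\mathbb{E}\bigl[d(f^n_\omega(u),f^n_\omega(v))^2\bigr]\,\dd\eta(u)\,\dd\eta(v),
\]
and synchronization of $f^n_\omega$ (Proposition~\ref{prop:sync+unicityStat}) together with two applications of dominated convergence makes this tend to $0$. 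Bounded convergence then gives $\mathbb{E}[V(M_\infty)]=0$, hence $V(M_\infty)=0$ a.s., and $M_\infty=\delta_{\pi(\omega)}$ for a measurable $\pi:X_\nu^\bN\to\bS^1$ (for instance recovered as the barycenter of $M_\infty$ after an embedding $\bS^1\subset\R^2$).

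The final and most delicate step is to pass from convergence of measures to pointwise convergence of $\overline{f^n_\omega}(x)$ for every $x\in\bS^1$. Using the exponential local contraction underlying Proposition~\ref{prop:syncofintervals} (from \cite{Mal:17}), I would obtain an exponential estimate $\mathbb{E}[d(f^n_\omega(x),f^n_\omega(y))^2]\leq Cq^n$ for each fixed pair, which by distributional equality transfers to $\overline{f^n_\omega}$; Markov and Borel--Cantelli then give $d(\overline{f^n_\omega}(x),\overline{f^n_\omega}(y))\to 0$ $\nu^\bN$-a.s.\ for every fixed pair. Fixing a reference point $y_0\in\bS^1$, Fubini extends this to $\eta$-a.e.\ $y$ simultaneously, for $\nu^\bN$-a.e.\ $\omega$. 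For such an $\omega$ and any subsequential limit $z^\ast=\lim_k\overline{f^{n_k}_\omega}(y_0)$, one then has $\overline{f^{n_k}_\omega}(y)\to z^\ast$ for $\eta$-a.e.\ $y$; bounded convergence yields $M_{n_k}\to\delta_{z^\ast}$, which combined with the previous step forces $z^\ast=\pi(\omega)$. Hence $\overline{f^n_\omega}(y_0)\to\pi(\omega)$ a.s., and for an arbitrary $x\in\bS^1$ the a.s.\ vanishing of $d(\overline{f^n_\omega}(x),\overline{f^n_\omega}(y_0))$ finishes the proof via the triangle inequality. The hard part is precisely this upgrade: the weak-$\ast$ convergence of the second step by itself only yields convergence in $\eta$-measure, which is too weak to control $\overline{f^n_\omega}$ at a prescribed point, and it is the exponential synchronization rate of \cite{Mal:17} that allows Borel--Cantelli to bridge the gap.
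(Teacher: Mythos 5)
Your first two steps are correct and, for the Dirac-mass step, take a genuinely different route from the paper. The paper also begins with the martingale convergence of $(\overline{f^n_\omega})_\ast\eta$ (quoting Guivarc'h--Raugi), but it proves that the limit $\rho_\omega$ is a point mass by extracting a subsequence along which $\overline{f^{n_k}_\omega}$ converges pointwise to a map $h$ with only countably many discontinuities, deducing $\rho_\omega=(h\circ g)_\ast\eta$ for every $g\in\Gamma_\nu$, and then invoking Corollary \ref{cor01:prop:NoSyncSection}. Your argument via the weak-$\ast$ continuous functional $V(\mu)=\iint d(u,v)^2\,\dd\mu\,\dd\mu$, combined with synchronization (Proposition \ref{prop:sync+unicityStat}) and the equality in law of $f^n_\omega$ and $\overline{f^n_\omega}$ for each fixed $n$, reaches the same conclusion more directly and avoids the orientation/monotonicity bookkeeping.

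The gap is in your third step, at exactly the point you yourself flag as the crux. The estimate $\mathbb{E}\bigl[d(f^n_\omega(x),f^n_\omega(y))^2\bigr]\le Cq^n$ does not follow from Proposition \ref{prop:syncofintervals} or from the local contraction of \cite{Mal:17}: those statements are almost sure, of the form $d(f^n_\omega(x),f^n_\omega(y))\le C(\omega)\,q^n$ with a random prefactor $C(\omega)$ (equivalently, a random time $N(\omega)$ after which the exponential decay sets in), and with no integrability or tail control on $C(\omega)$ one cannot pass to a uniform moment bound. Concretely, $\sum_n\mathbb{P}\bigl(d(\overline{f^n_\omega}(x),\overline{f^n_\omega}(y))>q_1^{\,n}\bigr)=\sum_n\mathbb{P}\bigl(d(f^n_\omega(x),f^n_\omega(y))>q_1^{\,n}\bigr)$ is of the order of $\mathbb{E}[N]$, which may be infinite, so Markov plus Borel--Cantelli does not go through. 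The bound you invoke is essentially ``contraction on average'', and the Remark following this proposition in the paper explicitly notes that this is only known here in the differentiable case (via \cite{GelSal:23}); for general homeomorphisms it is not available from the quoted results. The paper instead closes the argument softly: it fixes a point $y$ with $\overline{f^n_\omega}(y)\to\pi(\omega)$ almost surely, uses synchronization, dominated convergence and the equality in law to get $\int d(\overline{f^n_\omega}(x),\overline{f^n_\omega}(y))\,\dd\nu^{\bN}\to0$, and argues from there; the fact that even this passage from $L^1$-convergence to almost sure convergence of $\overline{f^n_\omega}(x)$ is the delicate point confirms that your step is where the real work lies. To repair your route you would need an explicit summable tail bound on the synchronization time, extracted from the quantitative (maximal-inequality type) estimates in \cite{Mal:17}, rather than from the qualitative almost sure statement of Proposition \ref{prop:syncofintervals}.
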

\begin{proof}
    Let $\eta$ be the $\nu$-stationary measure. By compactness of $\bS^1$ and the continuity of the maps in $X_\nu$ we can apply \cite[Proposition 1]{GuiRau:86} to get that  $\nu^{\bN}$-almost every $\omega\in X_\nu^\bN$ there exists a probability measure $\rho_\omega$ on $\bS^1$
    \begin{equation}\label{eq:00001cor}
    \rho_\omega=\lim_{n\to\infty}\overline{f_{\omega}^n}_\ast \eta=\lim_{n\to\infty}(\overline{f_{\omega}^n}\circ f_\xi^\ell)_\ast \eta,
    \end{equation}
    for $\nu^\bN$-almost every $\xi\in X_\nu^\bN$ and all $\ell\in\bN$. Where the convergence above is in the weak${}^\ast$-topology of the space of probability measures on $\bS^1$. The measurability of the map $\omega\mapsto\rho_\omega$ was also established in \cite{GuiRau:86}.
    
    Fix $\omega\in X_\nu^\bN$ such that \eqref{eq:00001cor} holds. Let us show that $\rho_\omega$ is a Dirac measure. Let $\hom^+(\bS^1)$ and $\hom^-(\bS^1)$ be the set of orientation-preserving and orientation-reversing homeomorphisms of the circle, respectively. Since $\hom(\bS^1)=\hom^+(\bS^1)\cup\hom^-(\bS^1)$, we can find a subsequence $(n_k)_{k\in\bN}$ of natural numbers such that each $\overline{f_{\omega}^{n_k}}$, $k\in\bN$, is in the same subset $\hom^\ast(\bS^1)$ simultaneously, for some $\ast\in\{+,-\}$. Let $Q=\{q_1,q_2,\cdots\}$ be a dense, countable subset of $\bS^1$. Consider a subsequence $(n_k(1))_{k\in\bN}$ of $(n_k)_{k\in\bN}$ such that $(\overline{f_{\omega}^{n_k(1)}}(q_1))_{k\to\infty}$ is convergent. For $\ell>1$, consider a subsequence $(n_k(\ell))_{k\in\bN}$ of $(n_k(\ell-1))_{k\in\bN}$ such that $(\overline{f_{\omega}^{n_k(\ell)}}(q_\ell))_{k\to\infty}$ is convergent. Therefore, we find a subsequence $(n_k')_{k\in\bN}$ of $(n_k)_{k\in\bN}$ such that for all $q\in Q$, the sequence $(\overline{f_{\omega}^{n_k'}}(q))_{k\to\infty}$ is convergent. Since for some $\ast\in\{+,-\}$, each map $\overline{f_{\omega}^{n_k'}}\in\hom^\ast(\bS^1)$, we get the set of the points $x\in\bS^1$ such that the sequence $(\overline{f_{\omega}^{n_k'}}(x))_{k\to\infty}$ does not converge is countable. Using the same argument above for subsequence extraction, we find a subsequence $(n_k'')_{k\in\bN}$ of $(n_k')_{k\in\bN}$ such that for all $x\in\bS^1$ the sequence $(\overline{f_{\omega}^{n_k''}}(x))_{k\to\infty}$ is convergent.
    Hence, the measurable map $h:\bS^1\to\bS^1$ given by
    \[
    h=\lim_{k\to\infty}\overline{f_{\omega}^{n_k''}}
    \]
    has at most countably many discontinuity points. In particular, the set of discontinuities of $h$ has $\eta$-measure zero.
    Then,  for $\nu^\bN$-almost every $\xi\in X_\nu^\bN$ and all $\ell\in\bN$.
    \begin{equation}\label{eq:00002cor}
    \rho_\omega=h_\ast \eta=(h\circ f_\xi^\ell)_\ast\eta.
    \end{equation}
    Since $X_\nu$ is the topological support of $\nu$, we get that \eqref{eq:00002cor} holds for all $\xi\in X_\nu^\bN$. That is,
    \begin{equation}\label{eq:00003cor}
    \rho_\omega=h_\ast \eta=(h\circ g)_\ast\eta,
    \end{equation}
    for all $g\in\Gamma_\nu$. By Corollary \ref{cor01:prop:NoSyncSection}, there exists a sequence $(g_n)_{n\in\bN}$ in $\Gamma_\nu$ and a point $z\in \bS^1$ such that
    \begin{equation}\label{eq:00004cor}
    \lim_{n\to\infty} (g_n)_\ast\eta=\delta_z.
    \end{equation} 
    Therefore, from \eqref{eq:00003cor} and \eqref{eq:00004cor}, we get that $\rho_\omega$ is a Dirac measure at $h(z)$. 
    To conlude this proof, define the measurable map $\pi:X_\nu^\bN\to\bS^1$ such that $\rho_\omega=\delta_{\pi(\omega)}$. 
\end{proof}
\begin{remark}
    Under the assumption of contraction on average, \cite[Theorem 1]{Ste:2012} already gives the existence of the random point $\pi$ in Proposition \ref{prop:NoSyncSection01}. When $\nu$ is proximal and supported in the group of circle diffeomorphisms with the semigroup $\Gamma_{\nu}$ not fixing any point in $\bS^1$, we may use \cite[Theorem 1.1]{GelSal:23} to conclude that the RDSs defined by $\nu$ contracts on average, obtaining a simple proof of the existence of the random point in the differentiable case.
\end{remark}

\begin{proposition}
\label{prop:pointwiseConvergenceToPi}
Assume that $\Gamma_{\nu}$ is proximal and does not fix any point in $\bS^1$. Let $\pi: X_\nu^\bN\to\bS^1$ be the measurable map defined in Proposition \ref{prop:NoSyncSection01}. Then
for every \(x \in \bS^1\) and $\nu^\bN$-almost every $\omega\in X_\nu^\bN$,
\[
    \lim_{n\to\infty}\overline{f_\omega^n}(x) = \pi(\omega).
\]
\end{proposition}

\begin{proof}
Fix \(x \in \bS^1\). Suppose, by contradiction, that there exists a set 
\(\Omega_x \subset \Omega\) with \(\nu^\bN(\Omega_x) > 0\) such that for every $\omega\in \Omega_x$, $\overline{f_\omega^n}(x)$ does not converge to $\pi(\omega)$.

Let \(Q \subset \bS^1\) be a fixed countable dense subset, and define
\[
    \Omega_{Q} := \bigcap_{q\in Q}\{
        \omega\in \Omega\colon\,
        \lim_{n \to \infty} d(\overline{f_\omega^n}(q),\overline{f_\omega^n}(x))=0
    \}.
\]
By Proposition \ref{prop:sync+unicityStat} and Lemma \ref{lem:backward-from-forward}, \(\nu^\bN(\Omega_Q) = 1\). Also, define the full $\nu^{\N}$-measure set
\[
    \Omega^\ast := \{\omega \in \Omega : \pi(\omega) \text{ exists}\}.
\]
Note that $\nu^\bN(\Omega_x \cap \Omega_Q \cap \Omega^\ast )=\nu^\bN(\Omega_x  )>0$. Fix $\omega \in \Omega_x \cap \Omega_Q \cap \Omega^\ast$.

By compactness, there exists a subsequence \((n_k)_k\subset \N\) and a point \(p \in \bS^1 \setminus \{\pi(\omega)\}\) such that $\overline{f_\omega^{n_k}}(x) \to p$. Moreover, since $\omega\in\Omega_Q$, for every $q\in Q$, $\overline{f_\omega^{n_k}}(q) \to p $. 

In particular, for any pair of points $q_1,q_2\in Q$, we can find a subsequence $(n_k)_{k\in \N'}$ of $(n_k)$ such that either $\diam f^{n_k}_{\omega}([q_1,q_2])\to 0$ or $\diam f^{n_k}_{\omega}([q_2,q_1])\to 0$ as $k\in \N'$ goes to infinity. Hence, using the density of $Q$ and a diagonal argument, we have that, up to passing to a subsequence, there exists a point $t_p\in\bS^1$ such that for all $y\in\bS^1\backslash\{t_p\}$
\begin{equation}
\overline{f_\omega^{n_k}}(y) \longrightarrow p.
\end{equation}
Therefore, for every $\varphi\in C^0(\bS^1)$, 
\[
    \int \varphi(\bar{f}^{n_k}_{\omega}(y))\, d\eta(y) \to \varphi(p).
\]
Now using Proposition \ref{prop:NoSyncSection01}, we see that $\varphi(p) = \varphi(\pi(\omega))$, for every $\varphi\in C^0(\bS^1)$. We conclude that, $p = \pi(\omega)$, which is a contradiction.
\end{proof}

\subsection{Proofs of  Theorem \ref{teo:01} and Corollary \ref{cor01:teo01}}
\begin{proof}[Proof of  Theorem \ref{teo:01}]
    Consider $\nu$ to be proximal so that its support $X_\nu$ does not fix any point of $\bS^1$. By Proposition \ref{prop:proxInverseSystem}, $\nu^-$ is also proximal. Then applying Proposition \ref{prop:NoSyncSection01} and Proposition \ref{prop:NoSyncSection02} to $\nu$ and $\nu^-$ there exist measurable maps $\theta^+,\pi^+,\theta^-,\pi^-:X_\nu^\bN\to\bS^1$ such that 
    \begin{enumerate}
         \item for all $x\in\bS^1$ and  $\nu^\bN$-almost every $\omega=(f_n)_{n\in\bN}\in X_\nu^\bN$
         $$
         \lim_{n\to\infty}f_1\circ\cdots\circ f_n(x)=\pi^+(\omega)
         \quad\mbox{and}\quad
         \lim_{n\to\infty}f_1^{-1}\circ\cdots\circ f_n^{-1}(x)=\pi^-(\omega);
         $$
         \item  for $\nu^\bN$-almost every $\omega=(f_n)_{n\in\bN}\in X_\nu^\bN$ and for each pair of closed sets $A\subset\bS^1\backslash\{\theta^+(\omega)\}$, $B\subset\bS^1\backslash\{\theta^-(\omega)\}$,
         $$
         \lim_{n\to\infty}\diam f_n\circ\cdots\circ f_1(A)=
         \lim_{n\to\infty}\diam f_n^{-1}\circ\cdots\circ f_1^{-1}(B)=0.
         $$
     \end{enumerate}
     To conclude the first part of the proof, it is enough to see that $\theta^+=\pi^-$ and $\theta^-=\pi^+$.
     
    Let $Q\subset \bS^1$ be a countable dense set. Applying Proposition \ref{prop:pointwiseConvergenceToPi} for $\nu^-$, we can find a set $ \Omega^\ast\subset X_{\nu}^{\N}$, with $\nu^{\N}(\Omega^\ast) = 1$, such that for every $\omega = (f_n)_{n\in \N}\in \Omega^{\ast}$
    \[
        (f^n_{\omega})^{-1}(q) = f_1^{-1}\circ\cdots \circ f^{-1}_n(q) \to \pi^-(\omega),
        \quad
        \text{for every }
        q\in Q.
    \]
    Let $I\subset \bS^1\backslash\{\theta^+(\omega)\}$ be any closed interval. Since $\diam f^n_{\omega}(I)\to 0$ as $n\to \infty$, we can find a subsequence $(n_k)_{k\in\N}\subset \N$, such that $f^{n_k}_{\omega}(I)$ converges to a point $p\in \bS^1$. In particular, we can find a point $q\in Q$ and $k_0\in \N$, such that $q\notin f^{n_k}_{\omega}(I)$, for every $k\geq k_0$. Hence, $(f^{n_k}_{\omega})^{-1}(q) \notin I$, for every $k\geq k_0$. This implies that $\pi^-(\omega)\notin I$. Since the choice of the interval $I\subset \bS^1\backslash\{\theta(\omega^+)\}$ was arbitrary, we conclude that $\pi^-(\omega) = \theta^+(\omega)$. The proof that $\theta^- = \pi^+$ is analogous.
     
     
     To conclude that the convergences in (1) and in (2) are exponentially fast with rate $q\in(0,1)$ (the rate of local contraction) use Proposition \ref{prop:syncofintervals} and recall that for all $n\in\bN$, random variables $\omega\mapsto f^n_{\omega}$ and $\omega\mapsto \overline{f_{\omega}^n}$ have the same distribution.

    To finish, note that for $\nu^\bN$-almost every $\omega\in X_\nu^\bN$ we have that for all $n\in\bN$
    \[
    \pi(\omega)=\overline{f_\omega^n}(\pi(\sigma^n\omega))\quad\mbox{and}\quad f_\omega^n(\theta(\omega))=\theta(\sigma^n(\omega)).
    \]
\end{proof}

\begin{proof}[Proof of Corollary \ref{cor01:teo01}]
    Under the conditions of $\nu$, we know that there exists a unique $\nu$-stationary measure $\eta$ and a unique $\nu^-$-stationary measure $\eta^-$. What we have to prove is 
    \[
     \eta=\int_{X_\nu^{\bN}}\delta_{\pi(\omega)}\,\dd\nu^{\bN}\quad\mbox{and}\quad\eta^-=\int_{X_\nu^{\bN}}\delta_{\theta(\omega)}\,\dd\nu^{\bN},
     \]
     Using that $\theta = \theta^+ = \pi^-$, it is enough to prove the first equality. By $\nu$-stationarity,
    $$
        \eta
        =\int_{X_\nu^{\bN}}(f_\omega^n)_\ast\eta\,\dd\nu^\bN(\omega)
        =\int_{X_\nu^{\bN}}(\overline{f_\omega^n})_\ast\eta\,\dd\nu^\bN(\omega),
    $$
    for all $n\in\bN$. Using Proposition \ref{prop:NoSyncSection01},
    \begin{align*}
        \eta&
        =\lim_{n\to\infty}\int_{X_\nu^{\bN}}(\overline{f_\omega^n})_\ast\eta\,\dd\nu^\bN(\omega)
        =\int_{X_\nu^{\bN}}\lim_{n\to\infty}(\overline{f_\omega^n})_\ast\eta\,\dd\nu^\bN(\omega)
        =\int_{X_\nu^{\bN}}\delta_{\pi(\omega)}\,\dd\nu^\bN(\omega).
    \end{align*}
    The corollary is proven.   
\end{proof}

\section{Differentiable case}
\label{sec:diffCase}
In this section, we prove Theorems \ref{teo:02} and \ref{teo03}. Throughout this section, we assume that $\nu$ is a probability measure on $\diff^{1+\tau}(\bS^1)$. Also, we assume that $\nu$ is proximal, \eqref{int-cond} holds and $X_\nu$ does not fix any point in $\bS^1$.

For any function $f\in \diff^{1+\tau}(\bS^1)$, let $\alpha_f:(0,\infty)\to\R$ be the \emph{modulus of continuity} of the function $\log|f'|$, that is, for $\varepsilon>0$,
\begin{align*}
    \alpha_f(\varepsilon)
    := \sup_{d(x,y)\leq \varepsilon}\,
    \left|
        \log\frac{|f'(x)|}{|f'(y)|}
    \right|.
\end{align*}
The Hölder continuity of $f'$ implies that $\alpha_f(\varepsilon)\to 0$ as $\varepsilon\to 0^+$. Further, the integrability condition \eqref{int-cond} implies that
\[
\int_{X_\nu}\alpha_f(\varepsilon)\,\dd\nu(f)<\infty,
\]
for all $\varepsilon>0$.
\begin{lemma}
\label{lem:modcont}
    For $\nu^{\bN}$-almost every $(f_n)_{n\in\bN}\in X_\nu^\bN$ we have that 
    \[
    \lim_{n\to\infty}\frac{1}{n}\sum_{j=1}^{n}\alpha_{f_j}(\varepsilon_j)=0,
    \]
    for any sequence of positive real numbers $(\varepsilon_n)_{n\in \N}$ converging to $0$.
\end{lemma}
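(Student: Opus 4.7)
The plan is to combine three elementary ingredients: monotonicity of $\alpha_f$, a dominated convergence argument to show the average modulus decays, and the strong law of large numbers along a countable scale of test radii.

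First I would record two basic facts about the modulus of continuity. Since $\alpha_f(\varepsilon)$ is a supremum of $|\log|f'(x)|-\log|f'(y)||$ over a set that grows with $\varepsilon$, the function $\varepsilon\mapsto \alpha_f(\varepsilon)$ is non-decreasing. Moreover, for every $\varepsilon>0$ and every $f\in\diff^{1+\tau}(\bS^1)$,
\[
\alpha_f(\varepsilon)\le \log\sup_{x\in\bS^1}|f'(x)|-\log\inf_{x\in\bS^1}|f'(x)|,
\]
which is $\nu$-integrable by \eqref{int-cond}. Since the Hölder regularity of $f'$ gives $\alpha_f(\varepsilon)\to 0$ pointwise as $\varepsilon\to 0^+$, dominated convergence yields
\[
A(\varepsilon):=\int_{X_\nu}\alpha_f(\varepsilon)\,\dd\nu(f)\longrightarrow 0\quad\text{as }\varepsilon\to 0^+.
\]

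Next I would apply the strong law of large numbers. For each fixed $\varepsilon>0$, the random variables $\omega\mapsto \alpha_{f_j}(\varepsilon)$ are i.i.d.\ under $\nu^{\bN}$ with finite mean $A(\varepsilon)$, so there is a $\nu^{\bN}$-full measure set $\Omega_\varepsilon\subset X_\nu^{\bN}$ on which $\frac1n\sum_{j=1}^n\alpha_{f_j}(\varepsilon)\to A(\varepsilon)$. Let $\Omega_0:=\bigcap_{k\in\bN}\Omega_{1/k}$, still of full measure, and on $\Omega_0$ the convergence holds simultaneously along the countable family $\varepsilon=1/k$.

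Now fix $\omega=(f_n)_n\in\Omega_0$ and an arbitrary sequence $\varepsilon_n\to 0^+$. Given $\delta>0$, pick $k$ so large that $A(1/k)<\delta$, and choose $N$ such that $\varepsilon_j<1/k$ for all $j\ge N$. By monotonicity of $\alpha_{f_j}$,
\[
\frac1n\sum_{j=1}^n\alpha_{f_j}(\varepsilon_j)
\le \frac1n\sum_{j=1}^{N-1}\alpha_{f_j}(\varepsilon_j)+\frac1n\sum_{j=N}^n\alpha_{f_j}(1/k).
\]
The first sum is a fixed finite quantity (almost surely finite by the integrability bound above), so it vanishes upon dividing by $n$, while the second is bounded by $\frac1n\sum_{j=1}^n\alpha_{f_j}(1/k)$, which converges to $A(1/k)<\delta$. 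Hence $\limsup_{n\to\infty}\frac1n\sum_{j=1}^n\alpha_{f_j}(\varepsilon_j)\le \delta$, and letting $\delta\to 0^+$ concludes the proof.

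The only subtle point is ensuring a single $\nu^{\bN}$-full measure set $\Omega_0$ that works uniformly for every sequence $(\varepsilon_n)$ simultaneously; this is handled precisely by the countable intersection and the monotonicity of $\alpha_f(\cdot)$, which converts the continuous parameter $\varepsilon$ into the discrete scale $1/k$.
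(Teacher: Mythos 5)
Your proof is correct and follows essentially the same route as the paper: apply Birkhoff's theorem (the strong law, in this i.i.d.\ setting) along the countable scale $\varepsilon=1/k$, then use monotonicity of $\alpha_f(\cdot)$ together with dominated convergence to pass to an arbitrary sequence $\varepsilon_n\to 0$. The only difference is that you spell out the integrable dominating function and the splitting of the sum, which the paper leaves implicit.
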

\begin{proof}
    Applying Birkhoff's theorem, we see that, for every $k\geq 1$ and $\nu^{\N}$-almost every $(f_n)_{n\in\N}\in X^{\N}_{\nu}$, the sequence of averages $\frac{1}{n}\sum_{j=1}^{n}\alpha_{f_j}(1/k)$ converges to the integral of $f\mapsto\alpha_f(1/k)$ with respect to $\nu$, as $n\to\infty$. The result follows from the monotonicity of $\varepsilon\mapsto \alpha_f(\varepsilon)$ and dominated convergence theorem.
\end{proof}

The next Lemma provides a control of distortion, which is uniform far from the point $\theta(\omega)\in \bS^1$, where $\theta:X_\nu^\bN\to \bS^1$ is the measurable map given by Proposition \ref{prop:NoSyncSection02}.
\begin{lemma}
\label{lem:distortionControl}
Given $\delta>0$, for $\nu^{\N}$-almost every sequence $\omega \in X^{\N}_{\nu}$, and for every closed interval $I\subset \bS^1\backslash\{\theta(\omega)\}$, there exists $N\in \N$ such that for every $n\geq N$ and every $x,y\in I$,
\[
    e^{-n\delta}\leq \frac{|(f^n_{\omega})'(x)|}{|(f^n_{\omega})'(y)|} \leq e^{n\delta}.
\]
\end{lemma}
\begin{proof}
Let $\omega\in X^{\N}_{\nu}$ be an element in the full measure set given by Proposition \ref{prop:NoSyncSection02} and define $\varepsilon_j := \diam(f^j_{\omega}(I))$, $j\geq 1$. Applying Lemma \ref{lem:modcont} with the sequence $(\varepsilon_j)_{j\geq 1}$, the result follows.
\end{proof}

Let $\eta$ be the $\nu$-stationary probability measure. Set
\begin{align*}
    \lambda(\eta,\nu) := \int \log|f'(x)|d(\eta\otimes\nu)(x,f).
\end{align*}
For $\omega \in X^{\N}_{\nu}$, we define
\begin{align*}
    \lambda(\omega,x)
    := \lim_{n\to\infty}\frac{1}{n}\log|(f^n_{\omega})'(x)|
    = \lambda(\eta,\nu),
\end{align*}
if the above limit exist. In particular, by Birkhoff's ergodic theorem, for $\nu^{\N}\otimes\eta$-almost every $(\omega,x)\in  X_\nu^\bN\times\bS^1$ we have $\lambda(\omega,x) = \lambda(\eta,\nu)$.

\begin{lemma}
\label{lem:lox=leta}
    Let $\nu$ be a probability measure on $\diff^{1+\tau}(\bS^1)$ satisfying the integrability condition \eqref{int-cond}. Then, for $\nu^\bN$-almost every $\omega\in X_\nu^\bN$ and any $x\neq\theta(\omega)$ we have 
    \[
        \lambda(\omega,x)=\lambda(\eta,\nu).
    \]
\end{lemma}
\begin{proof}
Let $\omega$ be a $\nu^\bN$-typical sequence in $X_\nu^\bN$. Since the $\nu$-stationary measure $\eta$ is non-atomic and by typicality of $\omega$, we can find some $y\in \bS^1\backslash\{\theta(\omega)\}$ such that $\lambda(\omega,y)=\lambda(\eta,\nu)$. Now take any $x\in \bS^1\backslash\{\theta(\omega)\}$. The result is now an immediate consequence of Lemma \ref{lem:distortionControl} applied with $I\subset \bS^1\backslash\{\theta(\omega)\}$ being the arc connecting $x$ and $y$.
\end{proof}
\subsection{Extremal Lyapunov exponents}
By Kingman's subadditive ergodic theorem, for $\nu^\bN$-almost every $\omega\in X_\nu^\bN$
$$
    \Lambda(\nu)
    = \lim_{n\to\infty}\frac{1}{n}\log\sup_{z\in \bS^1}|(f^n_{\omega})'(z)|
    \quad
    \text{and}
    \quad
    \lambda(\nu)
    = \lim_{n\to\infty}\frac{1}{n}\log\inf_{z\in \bS^1}|(f^n_{\omega})'(z)|.
$$
For every $\omega\in X_\nu^\bN$ and $n\geq 1$, let
\begin{align*}
    M_n(\omega) := \left\{
        z\in\bS^1\colon\, |(f^n_{\omega})'(z)| = \sup_{x\in\bS^1}|(f^n_{\omega})'(x)|
    \right\}.
\end{align*}

This is a non-empty compact subset of $\bS^1$.  The next proposition ensures that for typical $\omega\in X^{\N}_{\nu}$, the points where $|(f^n_{\omega})'|$ attains its maximum value accumulate at $\theta(\omega)$.
\begin{proposition}
\label{prop:exp-max-theta}
    For $\nu^\bN$-almost every $\omega\in X_\nu^\bN$ and any sequence $(z_n(\omega))_{n\in \N}$ with $z_n(\omega)\in M_n(\omega)$ we have that $z_n(\omega)\to \theta(\omega)$.
\end{proposition}
\begin{proof}
    Let $\omega\in X^{\N}_{\nu}$ be a $\nu^{\bN}$-typical sequence. Assume, by contradiction, that there exists a subsequence $(z_{n_k}(\omega))_k$, $z_{n_k}(\omega)\in M_{n_k}(\omega)$, that converges to some $z\neq \theta(\omega)$. Then, for some closed interval $I\subset\bS^1\backslash\{\theta(\omega)\}$, there exists $N\in\bN$ such that for $k\geq N$, $z_{n_k}(\omega)\in I$.
    
    By Lemma \ref{lem:lox=leta} and Lemma \ref{lem:distortionControl}, we can assume that $N\geq 1$ is large enough  such that $|(f_\omega^n)'(x)|<1$, for every $x\in I$ and $n\geq N$. Hence, for $k$ large enough
    \[
        |(f_\omega^{n_k})'(z_{n_k}(\omega))|<1.
    \]
    However, for every $k\geq 1$, $|(f_\omega^{n_k})'(z_{n_k}(\omega))|=\sup_{x\in\bS^1}|(f_\omega^{n_k})'(x)|\geq 1$.
\end{proof}

\begin{proposition}
\label{prop:maxExponentIsThetaGrowth}
For $\nu^{\N}$-a.e. $\omega\in \Omega$,
\begin{align*}
    \Lambda(\nu) = \lim_{n\to\infty}\frac{1}{n}\log|(f^n_{\omega})'(\theta(\omega))| = \lambda(\omega,\theta(\omega)).
\end{align*}
\end{proposition}
\begin{proof}
    Fixing the Hausdorff topology on compact subsets of $\bS^1$, we see that for every $n\geq 1$, the function $\Omega\ni \omega\mapsto M_n(\omega)$ is measurable. In particular, we can find a measurable selection $z_n:\Omega\to \bS^1$, where for every $\omega\in \Omega$, $z_n(\omega)\in M_n(\omega)$ (\cite[Proposition 4.6]{Via:14}). For each $n\geq 1$, define the measure $m_n$ on $\Omega\times\bS^1$ by
    \begin{align*}
        m_n(\Gamma\times C) = \frac{1}{n}\sum_{j=0}^{n-1}\int_{\Gamma}\, \delta_{f^j_{\omega}(z_n(\omega))}(C)\, d\nu^{\N},
    \end{align*}
    for any $\Gamma\times C\subset \Omega\times\bS^1$ measurable. Let $m$ be a limit of a subsequence of $(m_n)_n$, say $(m_{n_k})_k$, and notice that
    \begin{align*}
        \int_{\Omega\times\bS^1} \log|f_{\omega}'(z)|\, dm(\omega,z)
        &= \lim_{k\to\infty}\int_{\Omega\times\bS^1} \log|f_{\omega}'(z)|\, dm_{n_k}(\omega,z)\\
        &= \lim_{k\to\infty}\frac{1}{n_k}\int_{\Omega}\log|(f^{n_k}_{\omega})'(z_{n_k}(\omega))|\, d\nu^{\N}\\
        &= \Lambda(\nu).
    \end{align*}
    Also, by Birkohff ergodic theorem, there exist a measurable function $\lambda:\Omega\times \bS^1\to \R$ and set $X\subset \Omega\times\bS^1$, with $ m(X) = 1$ such that for every $(\omega,z)\in X$, 
    \begin{align*}
        \lim_{n\to\infty}\frac{1}{n}\log|(f^n_{\omega})'(z)| = \lambda(\omega,z)
        \quad
        \text{and}
        \quad
        \int \lambda\, dm = \int \log|f_{\omega}'(z)|\, dm(\omega,z) = \Lambda(\nu).
    \end{align*}
    Let $\Omega_0\subset \Omega$, $\nu^{\N}(\Omega_0) = 1$, such that for $\omega\in \Omega_0$,
    \begin{align*}
        \lim_{n\to\infty}\frac{1}{n}\log\sup_{z\in \bS^1}|(f^n_{\omega})'(z)| = \Lambda(\nu).
    \end{align*}
    Hence, for every $(\omega,z)\in X\cap (\Omega_0\times\bS^1)$,
    \begin{align*}
        \int\, \lambda\, dm = \Lambda
        \quad
        \text{and}
        \quad
        \lambda(\omega,z) \leq \Lambda(\nu)
        \text{ for every }
        (\omega,z)\in X\cap (\Omega_0\times\bS^1)
    \end{align*}
    This implies that the set $X_0 = \{(\omega,z)\in \Omega\times\bS^1\colon \lambda(\omega,z) = \Lambda(\nu)\}$ has total $m$-measure. Let $\pi:\Omega\times\bS^1\to \Omega$ be the projection in the first coordinate and notice that $\pi_*\, m = \nu^{\N}$. Then, the set $\pi(X_0)$ is measurable (\cite[Proposition 4.5]{Via:14}) and $\nu^{\N}(\pi(X_0)) = 1$.
    
    For any $\omega\in \pi(X_0)$, there exists $z(\omega)$ such that
    \begin{align*}
        \lim_{n\to\infty}\frac{1}{n}\log|(f^n_{\omega})'(z(\omega))| = \lambda(\omega,z) = \Lambda(\nu).
    \end{align*}
    Observe that, by Lemma \ref{lem:lox=leta}, $z(\omega)$ must coincide with $\theta(\omega)$. This finishes the proof of the proposition.
\end{proof}

Consider the probability measure $\nu^-$ supported in $X_\nu^-:=\{f^{-1}:f\in X_\nu\}$ as follows
\begin{equation}\label{def:eta-}
    \nu^-(\cdot)=\nu(\{f:f^{-1}\in\cdot\}).
\end{equation}
Note that $(\nu^-)^-=\nu$. If $\nu$ is proximal, it not true in general that $\nu^-$ is proximal. However, if we assume additionally that $\Gamma_{\nu}$ does not fix any point in $\bS^1$, then $\nu^-$ is proximal and this is the content of the next proposition.
\begin{proposition}
\label{prop:proxInverseSystem}
    Assume that $\Gamma_{\nu}$ is proximal and does not fix any point in $\bS^1$. Then, $\Gamma_{\nu^-}$ is also proximal and does not fix any point in $\bS^1$.
\end{proposition}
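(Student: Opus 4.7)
The plan is to handle the two conclusions separately. The non-fixed-point part is immediate: if some $p \in \bS^1$ were fixed by every element of $\Gamma_{\nu^-}$, then in particular $f^{-1}(p) = p$ for every $f \in X_\nu$, which gives $f(p) = p$, contradicting the hypothesis that $\Gamma_\nu$ does not fix any point of $\bS^1$.

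For the proximality of $\Gamma_{\nu^-}$, the core idea is to apply Proposition \ref{prop:NoSyncSection01} directly to $\nu$, which yields a measurable $\pi \colon X_\nu^\bN \to \bS^1$ such that for $\nu^\bN$-almost every $\omega = (f_n)_n$, the composition $\overline{f^n_{\omega}}(z) = f_1 \circ \cdots \circ f_n(z)$ converges to $\pi(\omega)$ for every $z \in \bS^1$. The key observation is that $(\overline{f^n_{\omega}})^{-1} = f_n^{-1} \circ \cdots \circ f_1^{-1}$ is an element of $\Gamma_{\nu^-}$, so a proximal sequence can be manufactured from a single well-chosen $\omega$.

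Given $x, y \in \bS^1$, I must ensure the chosen $\omega$ satisfies $\pi(\omega) \notin \{x, y\}$. I would do this by checking that the pushforward $\pi_\ast \nu^\bN$ is $\nu$-stationary---this follows from the identity $\pi(\omega) = f_1(\pi(\sigma\omega))$, obtained from the limit definition of $\pi$ and continuity of $f_1$, combined with shift-invariance of $\nu^\bN$---so by uniqueness (Proposition \ref{prop:sync+unicityStat}) it coincides with $\eta$, which is non-atomic. Hence a full-measure set of $\omega$ realizes both the pointwise limit and $\pi(\omega) \notin \{x, y\}$. Fixing such an $\omega$ and choosing $\varepsilon > 0$ small enough that $\{x, y\}$ lies outside $B(\pi(\omega), \varepsilon)$, set
$$
C_n := (\overline{f^n_{\omega}})^{-1}\bigl(\bS^1 \setminus B(\pi(\omega), \varepsilon)\bigr).
$$
Each $C_n$ is a closed sub-arc of $\bS^1$ because $\overline{f^n_{\omega}}$ is a circle homeomorphism, and the pointwise convergence $\overline{f^n_{\omega}}(z) \to \pi(\omega)$ guarantees that every $z \in \bS^1$ lies outside $C_n$ for all sufficiently large $n$. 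Since $x, y$ lie outside $B(\pi(\omega), \varepsilon)$, the preimages $(\overline{f^n_{\omega}})^{-1}(x)$ and $(\overline{f^n_{\omega}})^{-1}(y)$ both lie in $C_n$, so proximality reduces to proving $\diam C_n \to 0$.

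This last step is the main obstacle, since pointwise escape of a family of closed subsets of a compact metric space does not by itself force the diameter to vanish. I plan to leverage the arc structure of $C_n$ through a Hausdorff-compactness argument: if $\diam C_{n_k} \geq \delta > 0$ along a subsequence, compactness of the space of closed sub-arcs of $\bS^1$ of length at least $\delta$ (parametrized by an endpoint and a length) provides a further subsequence converging in Hausdorff distance to a closed arc $C$ of length $\geq \delta$, and any point interior to $C$ would lie in $C_{n_k}$ for all large $k$, contradicting the pointwise escape. Once $\diam C_n \to 0$ is established, the sequence $(\overline{f^n_{\omega}})^{-1}$ in $\Gamma_{\nu^-}$ witnesses $d((\overline{f^n_{\omega}})^{-1}(x), (\overline{f^n_{\omega}})^{-1}(y)) \to 0$, completing the proof.
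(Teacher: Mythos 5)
Your proof is correct, but it takes a genuinely different route from the paper's. The paper establishes proximality of $\Gamma_{\nu^-}$ through the repelling point $\theta$ of Proposition \ref{prop:NoSyncSection02}: for a typical $\omega$ one has $x,y\neq\theta(\omega)$, the forward compositions $f_\omega^{n_k}$ squeeze the complement of the $1/k$-neighborhood of $\theta(\omega)$ into a small arc avoiding $x$ and $y$ (this step uses that orbits accumulate on all of $\supp\eta$ and that $\eta$ is non-atomic), and inverting pulls $x$ and $y$ into the shrinking neighborhood of $\theta(\omega)$, the witnesses being $(f_\omega^{n_k})^{-1}=f_1^{-1}\circ\cdots\circ f_{n_k}^{-1}\in\Gamma_{\nu^-}$. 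You instead work with the attracting point $\pi$ of Proposition \ref{prop:NoSyncSection01} and the maps $(\overline{f_\omega^n})^{-1}=f_n^{-1}\circ\cdots\circ f_1^{-1}$, reducing proximality to $\diam C_n\to 0$, which you then derive from an elementary Hausdorff-compactness argument for closed sub-arcs. There is no circularity in either route, since both Propositions \ref{prop:NoSyncSection01} and \ref{prop:NoSyncSection02} are proved for $\nu$ alone. Your approach buys a self-contained and quantitative-free ``shrinking'' step (pointwise escape of arcs of length bounded below is impossible, by compactness of the endpoint--length parametrization), and it sidesteps the paper's slightly delicate choice of the intervals $[z_k-r_k,z_k+r_k]$ by invoking non-atomicity of $\eta=\pi_\ast\nu^{\bN}$ only to guarantee $\pi(\omega)\notin\{x,y\}$. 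The one point you should make explicit is the quantifier upgrade in Proposition \ref{prop:NoSyncSection01} from ``for each $z$, $\nu^{\bN}$-a.e.\ $\omega$'' to ``a.e.\ $\omega$, for all $z$'': intersecting the full-measure sets over a countable dense $Q\subset\bS^1$ suffices, because in your compactness argument the limit arc $C$ has nonempty interior and hence contains a point of $Q$, which already yields the contradiction.
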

\begin{proof}
    Note that a point is fixed by every element in $\Gamma_\nu$, if and only if, it is also fixed by every element in $\Gamma_\nu^-$.
    
    Let $x,y\in\bS^1$. Let $\theta\colon X_\nu^\bN\to\bS^1$ be as in Proposition \ref{prop:NoSyncSection02}. The property of synchronization implies that for $\nu^\bN$-almost every $\omega$, $x\neq\theta(\omega)$ and $y\neq\theta(\omega)$. Fix a typical sequence $\omega=(f_n)_{n\in\bN}\in X_\nu^\bN$. By Proposition \ref{prop:NoSyncSection02}, for all $k\in\bN$  we can find a natural number $n_k$, a positive number $r_k>0$ and a point $z_k\neq x$ such that 
    \begin{equation}\label{eq:theta-inverse-prox}
    f_\omega^{n_k}([\theta(\omega)+1/k,\theta(\omega)-1/k])\subset [z_k-r_k,z_k+r_k],\quad\mbox{and}\quad x,y\notin[z_k-r_k,z_k+r_k]
    \end{equation}
    Above, we have used that the $\nu$-stationary measure $\eta$ is non-atomic and that the orbits $(f_\omega^n(z))_{n\in\bN}$ must accumulate in the entire support of $\eta$. In our notation, $z\in [z-r,z+r]$ and $\bS^1=[z-r,z+r]\cup[z+r,z-r]$ for all $z\in\bS^1$ and $r>0$. From \eqref{eq:theta-inverse-prox}, we must have
    \[
    (f_\omega^{n_k})^{-1}([z_k+r_k,z_k-r_k])\subset [\theta-1/k,\theta+1/k],
    \]
    and so, $(f_\omega^{n_k})^{-1}(x),(f_\omega^{n_k})^{-1}(y)\in[\theta-1/k,\theta+1/k]$. To conclude, note that for all $k\in\bN$, $(f_\omega^{n_k})^{-1}$ is an element of the semigroup generated by the topological support of $\nu^-$.
\end{proof}
\begin{remark}
    We emphasize that we do not make any use of the differentiability assumption in the proof of Proposition \ref{prop:proxInverseSystem}.
\end{remark}

Since $\Gamma_{\nu}$ is proximal and does not fix any point in $\bS^1$, Proposition \ref{prop:proxInverseSystem} and Proposition \ref{prop:sync+unicityStat}, ensures that $\nu^-$ admits an unique $\nu^-$-stationary measure $\eta^-$ on $\bS^1$.
\begin{proposition}
\label{prop:expintheta-expinpi}
    For $\nu^{\N}$-almost every $\omega=(f_n)_{n\in\bN}\in X_\nu^\bN$,
    \begin{align*}
        \lim_{n\to\infty}\frac{1}{n}\log|(f_1\circ\cdots\circ f_n)'(\pi(\sigma^n(\omega)))|=\lim_{n\to\infty}\frac{1}{n}\log|(\overline{f_{\omega}^n})'(\pi(\sigma^n(\omega)))| = \lambda(\eta,\nu),
    \end{align*}
    and 
    \[
    \lim_{n\to\infty}\frac{1}{n}\log|(f_n\circ\cdots\circ f_1)'(\theta(\omega))|=\lim_{n\to\infty}\frac{1}{n}\log|(f_\omega^n)'(\theta(\omega))|= -\lambda(\eta^-,\nu^-).
    \]
    \end{proposition}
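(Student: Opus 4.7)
My plan is to express each derivative as a multiplicative cocycle over the Bernoulli shift $(\sigma,\nu^\bN)$ by using the equivariance property (3) of Theorem \ref{teo:01}, and then invoke Birkhoff's ergodic theorem. The integrability hypothesis \eqref{int-cond} guarantees that the relevant observables lie in $L^1(\nu^\bN)$.

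For the first limit, iterating property (3) yields $\overline{f_{\sigma^j\omega}^{n-j}}(\pi(\sigma^n\omega)) = \pi(\sigma^j\omega)$ for every $0\le j\le n$. Applying the chain rule to $\overline{f_\omega^n}=f_1\circ f_2\circ\cdots\circ f_n$ at the point $\pi(\sigma^n\omega)$ then telescopes to
\[
(\overline{f_\omega^n})'(\pi(\sigma^n\omega)) \;=\; \prod_{j=1}^n f_j'(\pi(\sigma^j\omega)),
\]
so that $\tfrac{1}{n}\log|(\overline{f_\omega^n})'(\pi(\sigma^n\omega))|$ is the $n$-th Birkhoff average of $g(\omega):=\log|f_1'(\pi(\sigma\omega))|$ under $\sigma$. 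Now $f_1$ and $\sigma\omega=(f_2,f_3,\dots)$ are $\nu^\bN$-independent, while by Corollary \ref{cor01:teo01} together with $\sigma$-invariance the law of $\pi(\sigma\omega)$ is $\eta$. Hence $(f_1,\pi(\sigma\omega))\sim\nu\otimes\eta$, so $\int g\,\dd\nu^\bN=\lambda(\eta,\nu)$ and Birkhoff's theorem concludes.

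For the second limit, the analogous computation using $f_\omega^j(\theta(\omega))=\theta(\sigma^j\omega)$ gives
\[
(f_\omega^n)'(\theta(\omega)) \;=\; \prod_{j=1}^n f_j'(\theta(\sigma^{j-1}\omega)),
\]
identifying $\tfrac{1}{n}\log|(f_\omega^n)'(\theta(\omega))|$ as the $n$-th Birkhoff average of $h(\omega):=\log|f_1'(\theta(\omega))|$. The obstruction here is that $\theta(\omega)$ depends on $f_1$, so $(f_1,\theta(\omega))$ is \emph{not} an independent product and the limit is not immediately an integral over $\nu\otimes\eta^-$. I will decouple by inverting property (3): $\theta(\omega)=f_1^{-1}(\theta(\sigma\omega))$, and the chain rule converts $h$ into
\[
h(\omega) \;=\; \log|f_1'(f_1^{-1}(\theta(\sigma\omega)))| \;=\; -\log|(f_1^{-1})'(\theta(\sigma\omega))|.
\]
Now $f_1^{-1}$ (with law $\nu^-$) is independent of $\sigma\omega$, and by Corollary \ref{cor01:teo01} combined with $\sigma$-invariance the law of $\theta(\sigma\omega)$ is $\eta^-$. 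Integrating gives $\int h\,\dd\nu^\bN=-\lambda(\eta^-,\nu^-)$.

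The main obstacle is precisely this coupling between $f_1$ and $\theta(\omega)$; the identity $\theta(\omega)=f_1^{-1}(\theta(\sigma\omega))$ disentangles them at the cost of passing to the inverse cocycle, which is what naturally produces the backward exponent $-\lambda(\eta^-,\nu^-)$. The $L^1$-integrability of $g$ and $h$ under $\nu^\bN$ is immediate from \eqref{int-cond}, and the analogous condition for $\nu^-$ invoked in the computation of $\int h\,\dd\nu^\bN$ follows from the identities $\sup|g'|=(\inf|f'|)^{-1}$ and $\inf|g'|=(\sup|f'|)^{-1}$ when $g=f^{-1}$.
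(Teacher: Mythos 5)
Your proof is correct and follows essentially the same route as the paper: both identify $\log|(\overline{f_\omega^n})'(\pi(\sigma^n\omega))|$ as a Birkhoff sum of $\omega\mapsto\log|f_1'(\pi(\sigma\omega))|$ via the equivariance of $\pi$, use the independence of $f_1$ and $\sigma\omega$ together with $\pi_\ast\nu^{\bN}=\eta$ to compute the integral, and handle the backward exponent by passing to inverses. The only cosmetic difference is in the second limit, where you decouple at the level of the one-step observable via $\theta(\omega)=f_1^{-1}(\theta(\sigma\omega))$, whereas the paper applies the first part wholesale to $\nu^-$ and then inverts the full composition $(f_\omega^n)^{-1}$; these are the same inversion argument performed at different scales.
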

    \begin{proof}
        Consider the function $\varphi:X_\nu^\bN\to \R$ defined, for every $\omega = (f_n)_{n\in\N}$, by, $\varphi(\omega) = \log|f_1'(\pi(\sigma\omega))|$. Notice that,
        \begin{align*}
            \log|(\overline{f_{\omega}^n})'(\pi(\sigma^n\omega)))|
            = \sum_{j=1}^n \log|f_j'(\overline{f^{n}_{\sigma^j\omega}}(\pi(\sigma^n\omega)))|
            = \sum_{j=0}^{n-1}\varphi(\sigma^j(\omega)).
        \end{align*}
        Hence, by Birkhoff ergodic theorem, for $\nu^{\N}$-almost every $\omega = (f_n)_{n\in\N}\in X_\nu^\bN$,
        \begin{align*}
            \lim_{n\to\infty}\frac{1}{n}\log|(\overline{f_{\omega}^n})'(\pi(\sigma^n(\omega)))|
            &= \int_{X_\nu^\bN} \log|f_{1}'(\pi(\sigma\omega)))|\,\dd\nu^{\N}(\omega)\\
            &= \int_{X_\nu}\int_{\bS^1} \log|f'(x)|\,\dd\eta(x)\,\dd\nu(f)
            = \lambda(\eta,\nu).
        \end{align*}
    In the last, we used that $\nu^\bN$ is $\sigma$-invariant.

    On the other hand, note that for all $n\in\bN$, using what was obtained above for $\nu^-$ (instead of $\nu$), we get $\nu^{\N}$-almost every $\omega=(f_n)_{n\in\bN}\in X_\nu^\bN$
    \begin{align*}
        \lim_{n\to\infty}\frac{1}{n}\log|((f_\omega^n)^{-1})'(\theta(\sigma^n(\omega)))|
        &=\lim_{n\to\infty}\frac{1}{n}\log|(f_1^{-1}\circ\cdots\circ f^{-1}_n)'(\theta(\sigma^n(\omega)))|\\
        &= \lambda(\eta^-,\nu^-).
    \end{align*}
    Since 
    \[
    \theta(\sigma^n\omega)=f_n\circ\cdots\circ f_1(\theta(\omega))=f_\omega^n(\theta(\omega)),
    \]
    we conclude
    \[
        \lim_{n\to\infty}\frac{1}{n}\log|(f_\omega^n)'(\theta(\omega))|=-\lambda(\eta^-,\nu^-)
    \]
     \end{proof}

\subsection{Proof of Theorem \ref{teo:02} and Corollary \ref{cor:teo:02}}
We first use the tools established in this section to give the characterization of the extremal Lyapunov exponents $\Lambda(\nu)$ and $\lambda(\nu)$.

Applying Proposition \ref{prop:maxExponentIsThetaGrowth} and Proposition  \ref{prop:expintheta-expinpi} for $\nu$ and $\nu^-$, we see that $\Lambda(\nu) = - \lambda(\eta^-,\nu^-)$ and $\Lambda(\nu^-) = - \lambda(\eta,\nu)$. However, note that
\begin{align*}
    \Lambda(\nu^-)&=\lim_{n\to\infty}\frac{1}{n}\int_{ X_\nu^\bN} \log\sup_{z\in \bS^1}|(f_n^{-1}\circ\cdots\circ f_1^{-1})'(z)|\,\dd\nu^{\N}\left((f_n)_{n\in\bN}\right)\\
    &=-\lim_{n\to\infty}\frac{1}{n}\int_{ X_\nu^\bN} \log\inf_{z\in \bS^1}|(\overline{f_\omega^n})'(z)|\,\dd\nu^{\N}(\omega)\\
    &=-\lim_{n\to\infty}\frac{1}{n}\int_{ X_\nu^\bN} \log\inf_{z\in \bS^1}|(f_\omega^n)'(z)|\,\dd\nu^{\N}(\omega)=-\lambda(\nu),
\end{align*}
where we used that the random variables $\omega\mapsto \inf_{z\in \bS^1}|(\overline{f_\omega^n})'(z)|$ and $\omega\mapsto \inf_{z\in \bS^1}|(f_\omega^n)'(z)|$ have the same distribution. Therefore,
\begin{align}
\label{eq:160825.1}
    \Lambda(\nu) = - \lambda(\eta^-,\nu^-)
    \quad
    \text{and}
    \quad
    \lambda(\nu) = \lambda(\eta,\nu).
\end{align}
We are now in position to finish the proof of Theorem \ref{teo:02}. Proposition \ref{prop:maxExponentIsThetaGrowth} with $x = \theta(\omega)\in\bS^1$, gives us that $\lambda(\omega,x) = \Lambda(\nu)$. If $x\neq \theta(\omega)$, then we can use Lemma \ref{lem:lox=leta} together with \eqref{eq:160825.1} and conclude that $\lambda(\omega,x) = \lambda(\eta,\nu) = \lambda(\nu)$. 

Using Proposition \ref{prop:expintheta-expinpi} with $x = \pi(\omega)$ we see that for $\nu^{\N}$-a.e. $\omega = (f_n)_{n\in\N}\in X^{\N}_{\nu}$, 
\[
    \lim_{n\to\infty}\frac{1}{n}\log|(f_n^{-1}\circ\cdots\circ f_1^{-1})'(x)| = -\lim_{n\to\infty}\frac{1}{n}\log|(\overline{f^n_{\omega}})'(\pi(\sigma^n(\omega)))| = -\lambda(\nu).
\]
If now $x\neq \pi(\omega)$, we can apply Lemma \ref{lem:lox=leta} once more, for $\nu^-$, together with \eqref{eq:160825.1}, obtaining that
\[
    \lim_{n\to\infty}\frac{1}{n}\log|(f_n^{-1}\circ\cdots\circ f_1^{-1})'(x)| = \lambda(\eta^-,\nu^-) = -\Lambda(\nu).
\]
This finishes the proof of Theorem \ref{teo:02}. Also notice that the equalities in \eqref{eq:160825.1} is exactly the conclusion of Corollary \ref{cor:teo:02}.

\section{Exact Dimensionality}
\label{sec:exactDim}
Throughout this section we assume that $\nu$ is a probability measure on $\diff^{1+\tau}(\bS^1)$ satisfying the integrability condition \eqref{int-cond}. Additionally, we assume that $\Gamma_{\nu}$ is proximal and does not fix a point in $\bS^1$. 

Let $\eta$ be the $\nu$-stationary measure on $\bS^1$. Our goal is to prove the exactness of the dimension of $\eta$. Our approach is strongly inspired by the proof of exact dimensionality presented in \cite{HocSol:17} and in this section, we point out the main modifications in the argument needed to complete the proof of Theorem \ref{teo03}.

Fix $\varepsilon>0$ and define
    \[
        \overline{\mathcal{Z}_n}(\omega,\varepsilon) := \{
            x\in \bS^1\colon\,
            |(\overline{f^n_{\omega}})'(x)| > e^{-n(\Lambda(\nu^-) - \varepsilon)}
        \}.
    \]
\begin{proposition}
\label{prop:delta-Mn}
Let $\delta>0$.
    For $\nu^\bN$-almost every $\omega\in X_\nu^\bN$ 
    \[
    \limsup_{n\to\infty}\frac{1}{n}\sup_{
        x:
        d(x,\overline{\mathcal{Z}_n}(\omega,\varepsilon))\geq\delta
    }\log d(\overline{f^n_{\omega}}(x),\pi(\omega))\leq \lambda(\eta,\nu) + \varepsilon.
    \]
\end{proposition}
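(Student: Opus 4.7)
My approach is to bound $d(\overline{f^n_\omega}(x),\pi(\omega))$ by integrating $|(\overline{f^n_\omega})'|$ along an arc $J_n(x)$ connecting $x$ to $\pi(\sigma^n\omega)$ and avoiding a fixed $\delta'$-neighborhood of $\overline{M_n}(\omega)$, exploiting the identity $\pi(\omega)=\overline{f^n_\omega}(\pi(\sigma^n\omega))$. Then
\[
d(\overline{f^n_\omega}(x),\pi(\omega))\leq|\overline{f^n_\omega}(J_n(x))|=\int_{J_n(x)}|(\overline{f^n_\omega})'(y)|\,\mathrm{d}y,
\]
so it suffices to show $\sup_{y\in J_n(x)}|(\overline{f^n_\omega})'(y)|\leq e^{n\lambda(\eta,\nu)+o(n)}$ uniformly in $x$ with $d(x,\overline{M_n}(\omega))\geq\delta$. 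The anchor for the comparison is $|(\overline{f^n_\omega})'(\pi(\sigma^n\omega))|=e^{n\lambda(\eta,\nu)+o(n)}$, a direct consequence of Proposition \ref{prop:expintheta-expinpi}.

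The heart of the argument is the distortion estimate. Using the chain rule for $\overline{f^n_\omega}=f_1\circ\cdots\circ f_n$ and the invariance identity $\overline{f^{n-k}_{\sigma^k\omega}}(\pi(\sigma^n\omega))=\pi(\sigma^k\omega)$, for every $y\in J_n(x)$,
\[
\bigl|\log|(\overline{f^n_\omega})'(y)|-\log|(\overline{f^n_\omega})'(\pi(\sigma^n\omega))|\bigr|\leq\sum_{k=1}^{n}\alpha_{f_k}\bigl(d(\overline{f^{n-k}_{\sigma^k\omega}}(y),\pi(\sigma^k\omega))\bigr).
\]
By Theorem \ref{teo:01}(1), the inner distance decays exponentially in $n-k$ at the universal local contraction rate $q$, so it is bounded by $C(y,\sigma^k\omega)\,q^{n-k}$. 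Using the Hölder estimate $\alpha_{f_k}(\varepsilon)\leq K_{f_k}\varepsilon^\tau$ (with $K_{f_k}$ integrable by \eqref{int-cond}), the right-hand side is dominated by a weighted i.i.d.\ sum of the form $\sum_{k=1}^{n}q^{(n-k)\tau}K_{f_k}$, which has bounded expectation uniformly in $n$; a Borel--Cantelli/Markov argument then yields that this quantity is $o(n)$ $\nu^\bN$-a.s., uniformly in $y\in J_n(x)$.

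A preliminary step is to ensure that $J_n(x)$ really exists as an arc uniformly separated from $\overline{M_n}(\omega)$. Since $\diam\overline{M_n}(\omega)\to 0$ (the lemma just before the proposition), this reduces to showing that $\pi(\sigma^n\omega)$ stays eventually uniformly bounded away from $\overline{M_n}(\omega)$. I would argue this by contradiction: if $\pi(\sigma^n\omega)$ accumulated on $\overline{M_n}(\omega)$ along a subsequence, applying the distortion argument locally between these two points would force $|(\overline{f^n_\omega})'(\pi(\sigma^n\omega))|$ to be subexponentially comparable to $\sup|(\overline{f^n_\omega})'|=e^{n\Lambda(\nu)+o(n)}$, contradicting Proposition \ref{prop:expintheta-expinpi} whenever $\Lambda(\nu)>\lambda(\eta,\nu)$; in the degenerate case $\Lambda(\nu)=\lambda(\eta,\nu)$ the proposition is a triviality from uniform derivative control. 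The main obstacle I anticipate is precisely the last technical piece of the distortion estimate: because the distances $d(\overline{f^{n-k}_{\sigma^k\omega}}(y),\pi(\sigma^k\omega))$ are not uniformly small for $k$ close to $n$, the terms near the top of the sum require the Hölder regularity to be exploited quantitatively, and the almost-sure $o(n)$ bound on a weighted i.i.d.\ sum must be argued carefully rather than via a direct appeal to the Cesàro-type Lemma \ref{lem:modcont}.
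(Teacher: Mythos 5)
Your strategy (bound $d(\overline{f^n_\omega}(x),\pi(\omega))$ by the length of $\overline{f^n_\omega}(J_n(x))$ and control $\sup_{y\in J_n(x)}|(\overline{f^n_\omega})'(y)|$ by a distortion estimate anchored at $\pi(\sigma^n\omega)$) is genuinely different from the paper's, which argues by contradiction: it extracts limit points, traps the offending points $x_k$ in a \emph{fixed} arc $[q_1,q_2]$ with rational endpoints disjoint from $\overline{M_{n_k}}(\omega)$, uses that $\diam \overline{f^n_\omega}([q_1,q_2])\to 0$ and that the endpoints converge to $\pi(\omega)$ at exact rate $\lambda(\eta,\nu)$, and then spreads this rate over the whole arc via Lemma \ref{lem:modcont}. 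The paper's route only ever evaluates $\alpha_f$ at arguments that tend to $0$, which is exactly what Lemma \ref{lem:modcont} is built for, and so it never needs any quantitative H\"older seminorm.

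This is where your argument has a genuine gap: you write that the H\"older constant $K_{f}$ of $\log|f'|$ is ``integrable by \eqref{int-cond}.'' Condition \eqref{int-cond} controls only $\int\log\sup|f'|\,\dd\nu$ and $\int\log\inf|f'|\,\dd\nu$; it gives integrability of $\alpha_f(\varepsilon)$ for each fixed $\varepsilon$ (since $\alpha_f(\varepsilon)\leq \log\sup|f'|-\log\inf|f'|$), but says nothing about $\sup_\varepsilon \alpha_f(\varepsilon)/\varepsilon^\tau$. Without integrable $K_{f_k}$, your weighted sum $\sum_{k=1}^n q^{(n-k)\tau}K_{f_k}$ need not be $o(n)$, and the terms with $k$ close to $n$ (where the distances are not small and Lemma \ref{lem:modcont} cannot be invoked) are precisely the ones you cannot handle. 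A second, independent gap is the bound $d(\overline{f^{n-k}_{\sigma^k\omega}}(y),\pi(\sigma^k\omega))\leq C(y,\sigma^k\omega)\,q^{n-k}$ ``uniformly in $y\in J_n(x)$.'' Each $\overline{f^{m}_{\xi}}$ is a surjective homeomorphism of $\bS^1$, so convergence to $\pi(\xi)$ is pointwise but never uniform in $y$; you must show that the shrinking ``bad'' preimage set of $\overline{f^{n-k}_{\sigma^k\omega}}$ avoids $J_n(x)$ for all $k\leq n$, and you must also temper the random constants $C(\cdot,\sigma^k\omega)$ along the orbit (they depend on $k$ and could a priori grow). Note also that the maps $\overline{f^{n-k}_{\sigma^k\omega}}=f_{k+1}\circ\cdots\circ f_n$ change entirely as $n$ varies, so this is a pullback-type uniformity, not a single almost-sure limit. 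Both points are fixable in principle (e.g.\ by splitting the sum at $k=n-m_n$ with $m_n=o(n)$ and using the trivial integrable bound on $\alpha_{f_k}$ for the top terms), but as written the proof does not close.
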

\begin{proof}
    Fix $\omega = (f_n)_{n\in \N}\in X_{\nu}^{\N}$ typical and, for every $n\geq 1$, write $g^n_{\omega} := f_n^{-1}\circ\cdots\circ f_1^{-1}$. Define,
    \[
        \mathcal{I}_k := \{
            x\in \bS^1\colon\,
            |(g_{\omega}^n)'(x)| \leq e^{n(\Lambda(\nu^-) - \varepsilon)},
            \text{ for every } n\geq k
        \}.
    \]
    Notice that, for any $k\geq 1$, $\mathcal{I}_k$ does not contain the point $\theta^-(\omega) = \pi(\omega)$. Moreover, there exists $C>0$ such that if $x\in \mathcal{I}_{k+1}\backslash \mathcal{I}_k$, then
    \[
        e^{k(\Lambda(\nu^-) - \varepsilon)}
        < |(g^k_{\omega})'(x)| \leq C e^{k(\Lambda(\nu^-) - \varepsilon)}.
    \]
    For each $k\geq 1$, let $x_k\in \mathcal{I}_k$ be a point that realizes the distance $d(\mathcal{I}_k,\theta^-(\omega))$. By Theorem \ref{teo:02}, $x_k\to \theta^-(\omega)$. Also, observe that for every $k\geq 1$, we may find $\xi_k\in \mathcal{I}_{k+1}\backslash \mathcal{I}_k$ such that
    \[
        1
        \geq d(g^k_{\omega}(x_{k+1}), g^k_{\omega}(x_k))
        = |(g^k_{\omega})'(\xi_k)|d(x_{k+1},x_k)
        \geq e^{k(\Lambda(\nu^-) - \varepsilon)}d(x_{k+1},x_k),
    \]
    which implies that $d(\theta^-(\omega), x_k) \leq e^{-k(\Lambda(\nu^-) - \varepsilon)}$ for every $k\geq 1$. In particular,
    \begin{align}
    \label{eq:031125.1}
        d(\theta^-(\omega), \mathcal{I}_k) \leq e^{-k(\Lambda(\nu^-) - \varepsilon)},
        \quad
        \text{for every}
        \quad
        k\geq 1.
    \end{align}
    
    By definition of the sets $\overline{\mathcal{Z}_k}(\omega,\varepsilon)$ and $\mathcal{I}_k$, for every $k\geq 1$ we have,
    \[
        \overline{f^k_{\omega}}(\bS^1\backslash \overline{\mathcal{Z}_k}(\omega,\varepsilon)) \subset \bS^1\backslash \mathcal{I}_k.
    \]
    Also, by Proposition \ref{prop:expintheta-expinpi}, $\pi(\sigma^k\omega) \notin \overline{\mathcal{Z}_k}(\omega,\varepsilon)$. Thus, using \eqref{eq:031125.1}, we conclude that for every $\delta>0$, 
    \[
        \frac{1}{k}\log\sup_{z\colon d(z,\overline{\mathcal{Z}_k}(\omega,\varepsilon))>\delta} d(\overline{f^k_{\omega}}(z), \pi(\omega)) \leq -\Lambda(\nu^-) + \varepsilon = \lambda(\eta,\nu) + \varepsilon.
    \]
    The result follows by taking $\limsup$ as $k\to \infty$ in both sides.

\end{proof}

\begin{proof}[Proof of Theorem \ref{teo03}]
Given an interval $J\subset \bS^1$, we write $|J|$ for its length. Fix  $\omega = (f_1, f_2, \dots) \in X_\nu^\bN$ $\nu^\bN$-typical and let $x_0 := \pi(\omega)$, where $\pi:X_\nu^\bN\to\bS^1$ is as in Theorem \ref{teo:01}. Set,
\[
    L_n:=\sup_{y\in\bS^1}\vert ((\overline{f_\omega^n})^{-1})'(y)\vert.
\]
Fix $\delta > 0$ such that $\eta(B_\delta(y))\leq 1-c$ for some $c\in(0,1)$ and for all $y \in \bS^1$ (such $\delta$ exists since $\eta$ is non-atomic). Write
\begin{align*}
    A_n^{\delta} := \left\{
        y\in \bS^1\colon\, d(y,\overline{\mathcal{Z}_n}(\omega,\varepsilon))\geq \delta
    \right\},
\end{align*}
and $\lambda:=\lambda(\nu)=\lambda(\eta,\nu)$. Using that
\[
    \lim_{n\to\infty}\frac{1}{n}\log L_n=\Lambda(\nu^-)=-\lambda.
\]
and Proposition \ref{prop:delta-Mn}, we can find a sequence $(\varepsilon_n)_{n\in\bN}$ of positive numbers converging to zero such that for every $n\geq 1$,
\[
    \exp(-\lambda n(1-\varepsilon_n)) \leq L_n \leq \exp(-\lambda n(1+\varepsilon_n)),
\]
and $\overline{f_\omega^n}(A_n^{\delta})$ is a neighborhood of $x_0$ of diameter $\exp(\lambda n(1+\varepsilon_n))$. Fix a large $N$ (which we mostly suppress in our notation) and set
\[
    r = r_N := \exp(\lambda N(1+\varepsilon_N)),
    \quad
    I_0 := B_r(x_0), 
    \quad
    \text{and}
    \quad
    I_n := (\overline{f_\omega^n})^{-1} I_0.
\]
Then $I_0$ is a neighborhood of $x_0$ and $I_n$ is a neighborhood of $x_n := (\overline{f_\omega^n})^{-1}(x_0) = \pi(\sigma^n \omega)$.

By definition of $L_n$, every interval in $\bS^1$ is expanded under $(\overline{f_\omega^n})^{-1}$ by at most $L_n$. Hence
\[
    |I_n| \leq L_n |I_0| \leq 2\exp(-\lambda n(1+\varepsilon_n)) \cdot\exp(\lambda N(1+\varepsilon_N)).
\]
Define
\[
    \hat\varepsilon_N := \frac{1}{N\lambda}\log 2 + \varepsilon_N + \sup_{1\leq n\leq N} \varepsilon_n \frac{n}{N},
\]
and notice that $\hat\varepsilon_N \to 0$ and 
\begin{equation}
\label{eq:I_N t_N}
    |I_n| \leq \exp(-\lambda n+\lambda N(1+\hat\varepsilon_N)).
\end{equation}

We do not require a lower bound for $|I_n|$, for $0\leq n <N$, but shall want one for $|I_N|$. Then, by our choice of $\varepsilon_N$, we know that $I_0 \supseteq \overline{f^N_{\omega}}(A_N^{\delta})$, whereby
\[
    I_N = (\overline{f_\omega^N})^{-1}(I_0)
    \supseteq (\overline{f_\omega^N})^{-1}\left(
        \overline{f_\omega^N} (A_N^{\delta})
    \right) = A_N^{\delta}.
\]
By definition of $\delta$, for some $c \in(0,1)$ 
\begin{equation}\label{eq:nu(IN)c}
    \eta(I_N) > c \quad \text{for all } N\in\bN. 
\end{equation}

Notice that
\[
    \nu(I_0) =\prod_{n=1}^{N}\frac{(\overline{f_\omega^{n-1}})_\ast \eta(I_0)}{(\overline{f_\omega^{n}})_\ast \eta(I_0)}\cdot(\overline{f_\omega^{N}})_\ast \eta(I_0)  =\prod_{n=1}^{N} \frac{\nu(I_{n-1})}{(f_n)_\ast \nu(I_{n-1})} \cdot \nu(I_N).
\]
Taking logarithms,
\[
    -\log \eta(I_0) = -\log \eta(I_N) + \sum_{n=1}^{N} \log \frac{(f_n)_\ast \eta(I_{n-1})}{\eta(I_{n-1})}.
\]
For $\omega = (f_j)_{j\in\N} \in X_\nu^\bN$, $\sigma^{n-1}(\omega)=(f_j)_{j\geq n}$. Hence, we see that an upper bound for
\begin{align}\label{eq:ladoderecho}
    \left\vert
        -\frac1N\log \eta(I_0)-\frac1N\sum_{n=1}^{N} \log\frac{\dd(f_n)_\ast \eta}{\dd\eta}(\pi(\sigma^{n-1}\omega))
    \right\vert
\end{align}
is given by the sum
\begin{align}
\label{eq:190325.1}
    \frac1N \log \eta(I_N)+\frac1N\sum_{n=0}^{N-1} G(\sigma^{n}\omega, \vert I_n\vert),
\end{align}
where
\begin{align*}
    G(\omega,t) := \sup\left\{
        \left\vert
            \log \frac{(f_n)_\ast \eta(J)}{\eta(J)} - \log\frac{\dd(f_n)_\ast \eta}{\dd\eta}(\pi(\omega))
        \right\vert\colon\,
        \pi(\omega)\in J, |J|\leq t
    \right\}.
\end{align*}

By \eqref{eq:nu(IN)c}, the first term in \eqref{eq:190325.1} goes to $0$ as $N\to\infty$. To deal with the second term in \eqref{eq:190325.1}, we apply Maker's theorem, with $t_{n,N}:=\exp(-\lambda n+\lambda N(1+\hat\varepsilon_N))$ (see \cite[Theorem 3.8]{HocSol:17}), to see that
\begin{align*}
    0\leq \lim_{N\to\infty}\frac1N\sum_{n=0}^{N-1} G(\sigma^{n}\omega, \vert I_n\vert)
    \leq \lim_{N\to\infty}\frac1N\sum_{n=0}^{N-1} G(\sigma^{n}\omega, \,t_{n,N} )
    = \int \lim_{t\to 0}G(\,\cdot\,, t)\, \dd\nu^{\N}.
\end{align*}
The $\nu^\bN$-integrability assumption on $\omega\mapsto\sup_tG(\omega, t)$, required to apply Maker's theorem, is ensured following the same argument as in \cite[Section 3.3]{HocSol:17} and is omitted here.

Using \cite[Proposition 3.6, item (i)]{Mal:17}, for all $\omega = (f_n)_{n\geq 1} \in X_\nu^\bN$, we have
    \[
        \lim_{t\to0^+}\sup_{I\ni \pi(\omega),\,\vert I\vert\leq t}\, \frac{(f_1)_\ast \eta(I)}{\eta(I)}=\frac{\dd(f_1)_\ast \eta}{\dd\eta}=\lim_{t\to0^+}\inf_{I\ni \pi(\omega),\,\vert I\vert\leq t}\,\frac{(f_1)_\ast \eta(I)}{\eta(I)}.
    \]
    In particular, for every $\omega\in X_{\nu}^{\N}$, $\lim_{t\to0^+}G(\omega,t)=0$. Thus, both terms in \eqref{eq:190325.1} converge to zero. Making $N\to\infty$ in \eqref{eq:ladoderecho} and applying Birkhoff's theorem we obtain
    \begin{align*}
        \lim_{N\to\infty}\frac{\log\eta(B_{r_N}(x_0))}{\log r_N}
        &= \lim_{N\to\infty}\frac{\log \eta(I_0)}{-\lambda N}
        = \lim_{N\to\infty}\frac1N\sum_{n=1}^{N} \log\frac{\dd(f_n)_\ast \eta}{\dd\eta}(\pi(\sigma^{n-1}\omega))\\
        &= \int\, \log\frac{
            \dd f_\ast \eta
        }{\dd\eta}(f(y))\, \dd(\eta\otimes\nu)(y,f)
        = h_F(\nu).
    \end{align*}
    This is enough to complete the proof.
\end{proof}

\section{examples}

Let us illustrate an instance of a proximal random dynamical system on the circle that cannot be topologically conjugate to a system induced by linear projective maps. 
\begin{example}\label{exemple:nonlinear}
Consider the support  $X_{\nu}$  of the probability measure $\nu$, and suppose that  $X_{\nu}$  contains at least two maps, say  $f$  and $g$. We discuss two situations.  

\begin{enumerate}
    \item \textbf{First case.}  
    The map  $f$  has four fixed points: two topological attractors and two topological repellers, arranged in such a way that the two repelling points are located very close to one of the attracting points.  
    On the other hand, the map $g$ is an irrational rotation.  

    \item \textbf{Second case.}  
    The map  $f$  has $N>2$ fixed points, among which at least one is a topological attractor.  
    The map $g$, in contrast, has a unique fixed point, which is topologically parabolic. This parabolic point is located very close to the attracting point of  $f$ , without coinciding with it. Moreover, it is positioned in such a way that no other attracting fixed point of  $g$  lies in between these two fixed points.  
\end{enumerate}

In both cases, the semigroup $\Gamma_{\nu}$ (generated by $X_\nu$) is proximal and has no fixed point in $\bS^1$. Moreover, if $X_\nu$ contains a map with more than two fixed points, then the induced RDS cannot be topologically conjugate to any RDS induced by projective linear maps.
\end{example}

\section*{Acknowledgments}
This research was financed by the Center of Excellence “Dynamics, Mathematical Analysis and Artificial Intelligence” at Nicolaus Copernicus University in Toruń. J.B. was also supported by the Narodowe Centrum Nauki Grant 2022/45/B/ST1/00179. We express our sincere gratitude to Katrin Gelfert for her valuable suggestions and for encouraging us throughout the elaboration process of this work.

\bibliographystyle{alpha}
\bibliography{bib}

\end{document}